\documentclass[11pt]{amsart}

\usepackage[T1]{fontenc}
\usepackage{amsmath,amssymb,amsfonts,amsthm, comment}
\usepackage{microtype}
\usepackage{flafter}
\usepackage{float}
\usepackage{tikz}
\usetikzlibrary{arrows.meta,calc,patterns,decorations.pathreplacing}
\usepackage[sort,compress]{cite}
\usepackage[hidelinks]{hyperref}

\numberwithin{equation}{section}
\newtheorem{theorem}[equation]{Theorem}
\newtheorem{lemma}[equation]{Lemma}
\newtheorem{corollary}[equation]{Corollary}
\newtheorem{proposition}[equation]{Proposition}
\newtheorem{remark}[equation]{Remark}
\newtheorem{example}[equation]{Example}

\newcommand{\N}{\mathbb{N}}
\newcommand{\CC}{\mathbb{C}}
\newcommand{\DD}{\mathbb{D}}

\title{Frames from Functional Calculus}

\author[I. Krishtal]{Ilya Krishtal}
\address{School of Mathematical and Statistical Sciences\\
Northern Illinois University\\
DeKalb, IL 60115, USA}
\email{ikrishtal@niu.edu}

\author[J. Mashreghi]{Javad Mashreghi}
\address{D\'epartement de math\'ematiques et de statistique\\
Universit\'e Laval\\
Qu\'ebec (Qu\'ebec) G1V 0A6, Canada}
\email{javad.mashreghi@mat.ulaval.ca}

\author[B. Miller]{Brendan Miller}
\address{Department of Mathematics\\
Vanderbilt University\\
1326 Stevenson Center, Nashville, TN 37240, USA}
\email{brendan.miller@vanderbilt.edu}

\date{}

\hypersetup{
  pdftitle={Frames from Functional Calculus},
  pdfauthor={Ilya Krishtal, Javad Mashreghi, Brendan Miller},
  pdfkeywords={Carleson frames, functional calculus, dynamical sampling,
  interpolating sequences, Stolz domains, frame perturbations}
}

\begin{document}

\begin{abstract}
We introduce frames generated by applying families of functions to a diagonal
operator through functional calculus, with diagonal entries forming a Carleson
interpolating sequence. First, for separated spectra contained in a Stolz
domain and an angular sector, we prove that systems of fractional operator
powers remain frames whenever the step size satisfies the natural angular
restriction. This extends earlier results for positive real spectra and permits
noninteger powers. 
We also
show that the angular restriction is sharp in general. Second, we give an
$\ell^2$-perturbation criterion for frames generated by more general continuous
functions, assuming a density condition on a curve containing the spectrum.
The results broaden the class of structured frames available in dynamical
sampling and related operator-orbit problems.
\end{abstract}

\subjclass[2020]{Primary 42C15; Secondary 47A60, 30H10, 94A20}
\keywords{Carleson frames, functional calculus, dynamical sampling,
interpolating sequences, Stolz domains, frame perturbations}

\maketitle

\section{Introduction}
Structured frames, such as Gabor, wavelet, or dynamical frames, have been a reliable cornerstone of frame theory research, as they frequently appear in applications and provide more sophisticated tools for analysis than general frames \cite{Christensen16}. In this paper, we introduce frames from functional calculus (FFC) which are a natural generalization of dynamical frames \cite{ACKM26, ACMT17}. As the most interesting case of (infinite) dynamical frames is the case of Carleson frames \cite{CHPS24, KM26}, we focus on FFC which generalize such frames.

A Carleson (interpolating) sequence is a sequence of complex numbers $\mathcal Z = \{z_j\}_{j \geq 0} \subset \DD$ that satisfies the \textit{Carleson condition}:
\begin{equation}\label{CCond}
\inf_{k} \prod_{j\neq k} \bigg|\frac{z_k - z_j}{1-\overline{z_j}z_k} \bigg| =: \delta > 0.
\end{equation}
The Carleson condition has been well studied, and the basic theory of interpolating sequences can be found, for example, in \cite{NN2012, Garnett81}. There are many partial characterizations of Carleson sequences, and in this paper we will restrict our attention to the subclass comprised of separated sequences lying in a Stolz domain. That is, we have
 \[
\inf_{j \neq k} \bigg| \frac{z_j - z_k}{1-\overline{z_j}z_k}\bigg| > 0,
\]
and $
\mathcal Z\subset \mathcal S_\alpha$ for some $\alpha \ge 1$ where
\[
\mathcal S_\alpha = \left\{z\in \DD: |1-z| \leq \alpha(1-|z|)\right\},\qquad \alpha\geq1.
\]
These two conditions taken together imply \eqref{CCond}, as shown in \cite[pp~157-158]{NN2012}.

\begin{figure}[t]
\centering
\begin{tikzpicture}[scale=2.35]
  \def\a{2.15}
  \fill[gray!8] (0,0) circle (1);
  \draw[gray!65] (0,0) circle (1);
  \draw[->] (-1.15,0) -- (1.18,0) node[right] {$\operatorname{Re} z$};
  \draw[->] (0,-1.08) -- (0,1.08) node[above] {$\operatorname{Im} z$};
  \path[fill=gray!18,draw=black!75,thick]
    plot[domain=-180:180,samples=180,variable=\t]
    ({((\a*\a-cos(\t))-sqrt((\a*\a-cos(\t))*(\a*\a-cos(\t))-(\a*\a-1)*(\a*\a-1)))/(\a*\a-1)*cos(\t)},
     {((\a*\a-cos(\t))-sqrt((\a*\a-cos(\t))*(\a*\a-cos(\t))-(\a*\a-1)*(\a*\a-1)))/(\a*\a-1)*sin(\t)}) -- cycle;
  \draw[black,very thick] (0,0) -- (1,0);
  \fill (1,0) circle (0.018) node[below right] {$1$};
  \fill (0,0) circle (0.012) node[below left] {$0$};
  \fill[black] (0.72,0.17) circle (0.018) node[above] {$z$};
  \draw[dashed,black!75] (1,0) -- (0.72,0.17);
  \draw[decorate,decoration={brace,amplitude=4pt},black!75] (1,0) -- (0.72,0.17)
    node[midway,above right=3pt] {$|1-z|$};
  \node[black] at (0.40,-0.36) {$\mathcal S_\alpha$};
  \node[gray!70] at (-0.63,0.62) {$\DD$};
\end{tikzpicture}
\caption{A schematic Stolz domain $\mathcal S_\alpha=\{z\in\DD: |1-z|\leq \alpha(1-|z|)\}$. The shaded set narrows non-tangentially toward the boundary point $1$}
\label{F:stolz-domain}
\end{figure}

Given a Carleson sequence $\mathcal{Z}$, the \emph{Carleson operator} is the diagonal operator $D$ on $\ell^2(\N_0)$ formed with the sequence $\mathcal Z$ on its diagonal. Given the generating vector,
\[
g := \left( \sqrt{1-|z_j|^2} \right)_{j \geq 0} \in \ell^2(\N_0),
\]
the system $\{D^k g\}_{k = 0}^\infty$ forms a frame for $\ell^2(\N_0)$, typically called a \emph{Carleson frame}. It is known that if $A \in B(\ell^2)$ is normal and the system $\{A^k f\}_{k = 0}^\infty$ forms a frame for $\ell^2$, then there is a bounded invertible operator $P \in B(\ell^2)$ such that $PAP^{-1}$ is a Carleson operator and $P f$ is the corresponding generating vector \cite{ACMT17, CMPP20}.

Given a family $\mathcal F=(f_k)_{k\geq0}$ of continuous functions on a connected closed set containing $\mathcal Z$, our goal is to investigate conditions under which the sequence

\begin{equation}\label{E:seq-fk-1}
(f_k(D)g)_{j \geq 0} = \left( f_k(z_j)\sqrt{1-|z_j|^2} \right)_{j \geq 0}, \qquad k \geq 0,
\end{equation}
is a frame for $\ell^2(\N_0)$.

In \cite{CHPS24} it was shown that if $\mathcal{Z}$ is such that $\{|z_j|\}_{j \geq 0}$ is a strictly increasing Carleson sequence, then the system \eqref{E:seq-fk-1} forms a frame when $f_k(z) = z^{Nk}$ for every $N\in\N$. An immediate consequence of this is that if $z_0 \neq 0$, then one can also choose $f_k(z) = z^{Nk+j}$ for some values $N \in \N$, $j\in\{0,1,\ldots,N-1\}$. It was conjectured, and later proved in \cite{KM26}, that if $\mathcal{Z} \subset (0,1)$, then this result extends to the family $f_k(z) = z^{Nk + j_k}$ for $j_k \in [0,N)$.

Throughout the paper, non-integer powers are defined by the principal branch,
\[
z^\lambda=\exp(\lambda\operatorname{Log} z),\qquad \operatorname{Arg} z\in(-\pi,\pi),
\]
which is single-valued on each sector
    \[
    \DD_c=\{z=re^{i\theta}\in\DD\setminus\{0\}: \theta\in[-c,c]\},\qquad c\in[0,\pi).
    \]
Whenever the expression $\pi/c$ occurs, we use the convention $\pi/0=+\infty$.

One of the two main contributions of this paper extends the above-mentioned result from \cite{KM26} to encompass separated sequences in a Stolz domain.
We prove the following theorem in Section \ref{S:CSSD}.

\begin{theorem}\label{T:main1}
Assume that $\mathcal Z=\{z_j\}_{j\geq0}$ is a separated sequence contained
in $\mathcal S_\alpha\cap\DD_c$ for some $\alpha\geq1$ and $0\leq c<\pi$.
If $D$ is the Carleson operator associated with $\mathcal Z$ and $g$ is its
canonical generating vector, then
\[
\{D^{rk+j_k}g\}_{k\geq0}
\]
forms a frame for every $0<r<\pi/c$ and every choice $j_k\in[0,r)$.
\end{theorem}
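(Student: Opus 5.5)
The plan is to move the problem to the exponential side, where it becomes a sampling problem. Put $w_j:=-\operatorname{Log} z_j$. Since $\mathcal Z\subset\DD_c$, these points lie in the right half-plane with $|\operatorname{Im} w_j|\le c$. The Stolz condition makes $w_j\to0$ non-tangentially, and separation of $\mathcal Z$ in the pseudo-hyperbolic metric turns into separation of $\{w_j\}$ in the hyperbolic metric of the half-plane. Moreover $1-|z_j|^2\asymp \operatorname{Re} w_j$ uniformly on $\mathcal S_\alpha$, because $|z_j|$ is bounded away from $0$ except for finitely many $j$, which can be handled separately.

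For $x\in\ell^2(\N_0)$ and real $t\ge0$ define
\[
F_x(t):=\langle x, D^{t}g\rangle=\sum_{j} x_j\, e^{-t\overline{w_j}}\sqrt{1-|z_j|^2}.
\]
Then the frame inequalities for $\{D^{rk+j_k}g\}_{k\geq0}$ read
\[
\sum_k |F_x(t_k)|^2\asymp\|x\|^2,\qquad t_k=rk+j_k .
\]

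\textbf{Step 1 (continuous frame / Riesz sequence).} I first show that
\[
\int_0^\infty|F_x(t)|^2\,dt\asymp\|x\|^2 .
\]
The Gram matrix of the functions $e^{-t\overline{w_j}}\sqrt{1-|z_j|^2}$ in $L^2(0,\infty)$ has entries
\[
\frac{\sqrt{(1-|z_i|^2)(1-|z_j|^2)}}{w_i+\overline{w_j}}\asymp\frac{2\sqrt{\operatorname{Re}w_i\operatorname{Re}w_j}}{w_i+\overline{w_j}}.
\]
This is, up to the bounded diagonal change of weights, the normalized Cauchy-kernel Gram matrix of the sequence $\{w_j\}$. By Paley--Wiener it is bounded and invertible exactly when $\{w_j\}$ is interpolating for $H^2$ of the half-plane. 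That holds because \eqref{CCond} is conformally invariant and follows from separation plus the Stolz condition.

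\textbf{Step 2 (upper bound).} Next I show $\sum_k|F_x(t_k)|^2\lesssim\|x\|^2$. The points $t_k$ are $r$-relatively separated. On each $[t_k-\rho,t_k+\rho]$ I bound $|F_x(t_k)|^2$ by a local mean of $|F_x|^2$ plus $|F_x'|^2$. The derivative $F_x'=-\langle x,\overline{\,\cdot\,}\rangle$-type expression has coefficients multiplied by $w_j$, and these are bounded. Step 1 applied to the bounded diagonal operator $\operatorname{diag}(w_j)$ then gives the bound.

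\textbf{Step 3 (lower bound, the main obstacle).} I need
\[
\int_0^\infty|F_x|^2\lesssim\sum_k|F_x(t_k)|^2 .
\]
Write
\[
F_x(t)=\sum_j b_j e^{-t\operatorname{Re}w_j}e^{it\operatorname{Im}w_j},
\]
so $F_x$ is a superposition of damped oscillations whose frequencies lie in $[-c,c]$. This is where $r<\pi/c$ has to enter, as a Nyquist-type condition. I would first try a Gröchenig/Wirtinger-type argument. On each interval between consecutive samples, estimate $\|F_x-\sum_k F_x(t_k)\chi_k\|_{L^2}$ by $(\text{gap}/\pi)\|F_x'\|_{L^2}$. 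Then split $F_x'$ into an oscillatory part with norm at most $c\|F_x\|$ and a damping part, and absorb both.

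The real difficulty is twofold. First, the damping part $\operatorname{Re}w_j$ is not small in a uniform sense. Second, the arbitrary offsets $j_k\in[0,r)$ allow gaps up to $2r$, while the naive condition for band $[-c,c]$ needs gaps below $\pi/c$. So a direct maximum-gap theorem loses a factor $2$.

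To recover the sharp range I would instead use analyticity. Non-tangential approach of $w_j\to0$, together with $|\operatorname{Im}w_j|\le c$, lets $F_x$ extend holomorphically to a sector or strip around $(0,\infty)$ with controlled growth. I would then compare $F_x$ with the integer-step case via a Kadec/Avdonin-type perturbation: the density of $\{t_k\}$ is exactly $1/r>c/\pi$, and the averaged displacement over blocks is bounded. If that fails, a fallback is to reduce to the known real-spectrum result of \cite{KM26}. The idea is to treat the damping factors $e^{-t\operatorname{Re}w_j}$ as in that paper and handle only the unimodular phases $e^{it\operatorname{Im}w_j}$ by the band-limited sampling estimate with step $r<\pi/c$.
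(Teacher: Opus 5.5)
There is a genuine gap: Step 3 is the whole theorem, and the proposal does not prove it.

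\textbf{Steps 1 and 2.} These are essentially fine. One caveat: separation and the Carleson condition for $\{w_j\}$ must be checked directly, for example by local comparison near $1$, because $-\operatorname{Log}$ is not a M\"obius map of $\DD$ onto the half-plane. But these steps only reformulate the problem and give the Bessel bound. Once Step 1 holds, the inequality in Step 3 is \emph{equivalent} to the lower frame bound.

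\textbf{Why none of the listed strategies works as stated.}
\begin{itemize}
\item Wirtinger route. Besides the factor-$2$ loss you note, it needs a Bernstein inequality $\|\text{oscillatory part of }F_x'\|\leq c\|F_x\|$ that you do not have. $F_x$ is not band-limited: its Fourier transform is $\sum_j b_j/(\overline{w_j}+i\xi)$. The functions $\phi_j(t)=e^{-t\overline{w_j}}\sqrt{1-|z_j|^2}$ form only a Riesz sequence, so multiplying coefficients by $\operatorname{Im}w_j$ costs a factor $c\sqrt{B/A}$, not $c$ (here $A,B$ are the Riesz bounds).
\item Kadec/Avdonin. These theorems concern exponential systems in Paley--Wiener spaces and need displacements below a quarter of the spacing. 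Your $j_k$ range over a whole period $[0,r)$.
\item Fallback to the real-spectrum case. It cannot separate damping from phase, because each term carries its own pair $(\operatorname{Re}w_j,\operatorname{Im}w_j)$.
\end{itemize}

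\textbf{The missing idea: localize in the spectrum.} On a tail $\{z_j\}_{j\geq J}$ the lower bound is comparatively easy, and the tail is not where $r<\pi/c$ matters.
\begin{itemize}
\item The paper compares with $t_k=rk$. It uses $\sum_k|z_j^{rk+j_k}-z_j^{rk}|^2\leq C_r^2$, which follows from $|1-z^s|\leq C_r(1-|z|^r)$. The Blaschke tail $\sum_{j\geq J}(1-|z_j|^2)$ then makes the total perturbation smaller than the lower frame bound.
\item In your own framework, for $x\in V_J$ you have $\|F_x'\|\leq\sqrt{B/A}\,\sup_{j\geq J}|w_j|\,\|F_x\|$, and $\sup_{j\geq J}|w_j|\to0$. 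So your Wirtinger estimate with gaps below $2r$ already closes once $J$ is large.
\end{itemize}

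\textbf{What remains: the finitely many points away from $1$.} This is where the genuine obstruction, aliasing, sits, exactly as in Remark~\ref{R:sharpness}. Handling these points needs two ingredients absent from your plan.
\begin{itemize}
\item A closed-range argument for adding finitely many rows. The paper uses Proposition~\ref{P:fractional-replacement} together with a compact-perturbation theorem for frame sequences.
\item Completeness. This is a uniqueness theorem: if $\sum_j x_j\sqrt{1-|z_j|^2}\,\overline{z_j^{\lambda_k}}=0$ for all $k$, then $x=0$. The paper obtains it from \cite[Theorem~3.5]{KM26} via the logarithmic block density $L(\Lambda)=1/r>c/\pi$.
\end{itemize}
Completeness is where the hypothesis $r<\pi/c$ really enters. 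It also enters through injectivity of $z\mapsto z^r$ on $\mathcal Z$ in the base case $j_k\equiv0$. Nothing in your Nyquist heuristics supplies either of these.
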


Our second main contribution uses frame perturbation to obtain a wider class of FFC.  In Section~\ref{S:FFC}, we prove the following result.

\begin{theorem}\label{T:main2}
Let $\mathcal Z=\{z_j\}_{j\geq0}$ be a separated sequence contained in
$\mathcal S_\alpha\cap\DD_c$ for some $\alpha\geq1$ and $0\leq c<\pi$.
Let $D$ be the associated Carleson operator and let $g$ be its canonical
generating vector. Let $\Gamma=\gamma([0,1])\subset\overline{\DD}$, where
$\gamma:[0,1]\to\overline{\DD}$ is injective and continuous, and assume that
$\mathcal Z\subset\Gamma$. Let $\{f_k\}_{k\geq0}\subset C(\Gamma)$ satisfy
\begin{equation}\label{E:fk-unif-linfty4}
\overline{\operatorname{span}}\{f_k:k\geq0\}=C(\Gamma).
\end{equation}
Let $\Lambda=\{\lambda_k:k\geq0\}\subset[0,\infty)$ be countably infinite
and enumerated in strictly increasing order. Assume that there are constants
$r>0$ and $N\in\N$, with $r<\pi/c$, such that
\begin{equation}\label{E:counting_hyp}
1\leq\bigl|\Lambda\cap[rm,r(m+1))\bigr|\leq N,
\qquad m\in\N_0.
\end{equation}
Assume further that there exists $M\geq0$ such that
\begin{equation}\label{E:fk-ell2-perturbation-main}
\sum_{k=0}^{\infty}\left|f_k(z_j)-z_j^{\lambda_k}\right|^2\leq M^2,
\qquad j\geq0.
\end{equation}
Then $\{f_k(D)g\}_{k\geq0}$ forms a frame for $\ell^2(\N_0)$.
\end{theorem}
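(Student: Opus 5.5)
The plan is to compare $\{f_k(D)g\}$ with the reference system $\{D^{\lambda_k}g\}_{k\geq0}$, which we first show is a frame. We then show that the difference of the two analysis operators is a compact operator plus an operator of small norm. Finally, the density hypothesis \eqref{E:fk-unif-linfty4} rules out a kernel. Throughout, write $T_\lambda x=(\langle x,D^{\lambda_k}g\rangle)_k$ and $T_f x=(\langle x,f_k(D)g\rangle)_k$.

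\emph{Step 1: the reference system is a frame.} By \eqref{E:counting_hyp}, each interval $[rm,r(m+1))$ contains some $\lambda_{k_m}=rm+j_m$ with $j_m\in[0,r)$. By Theorem~\ref{T:main1}, $\{D^{\lambda_{k_m}}g\}_{m\geq0}$ is a frame, with lower bound $A>0$. Since it is a subfamily of $\{D^{\lambda_k}g\}$, the latter inherits the lower bound $A$.

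For the upper bound, use that each interval contains at most $N$ points of $\Lambda$. This lets us split $\Lambda$ into $N$ subsequences $\Lambda^{(1)},\dots,\Lambda^{(N)}$, each meeting every interval $[rm,r(m+1))$ at most once. For each $i$, fill the empty intervals with arbitrary exponents $rm$. This produces a family of the form $\{D^{rm+j_m}g\}$ that is again a frame by Theorem~\ref{T:main1}, so each $\Lambda^{(i)}$ gives a Bessel sequence. Summing the $N$ Bessel bounds gives an upper bound $B$, and $T_\lambda$ satisfies $\sqrt A\|x\|\le\|T_\lambda x\|\le\sqrt B\|x\|$.

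\emph{Step 2: the perturbation is compact plus small.} Put $d_{kj}=f_k(z_j)-z_j^{\lambda_k}$, $g_j=\sqrt{1-|z_j|^2}$ and $T=T_f-T_\lambda$. For $x$ supported in a set $E\subset\N_0$, Cauchy--Schwarz and \eqref{E:fk-ell2-perturbation-main} give
\[
\|Tx\|^2=\sum_k\Bigl|\sum_{j\in E}x_j\overline{d_{kj}}g_j\Bigr|^2\le\|x\|^2\sum_{j\in E}g_j^2\sum_k|d_{kj}|^2\le M^2\Bigl(\sum_{j\in E}g_j^2\Bigr)\|x\|^2 .
\]
Taking $E=\N_0$ and using $g\in\ell^2$ shows that $T$, and hence $T_f$, is bounded; this is the upper frame bound.

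Now fix $\varepsilon>0$ and choose $J$ with $M^2\sum_{j>J}g_j^2<\varepsilon^2$. Let $P_J$ be the coordinate projection onto $\{0,\dots,J\}$. Then $T=TP_J+T(I-P_J)$, where $K:=TP_J$ has finite rank and $\|T(I-P_J)\|<\varepsilon$. Taking $\varepsilon<\sqrt A/2$, we get
\[
\|T_fx\|\ge\tfrac{\sqrt A}{2}\|x\|-\|Kx\| .
\]
This inequality with $K$ compact is the standard criterion for $T_f$ to have finite-dimensional kernel and closed range: a bounded sequence with $T_fx_n\to0$ has a subsequence on which $Kx_n$ converges, and so $x_n$ converges as well. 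Hence, once $T_f$ is shown to be injective, it is bounded below, which is the lower frame bound.

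\emph{Step 3: injectivity.} This is the step where the density hypothesis enters. Suppose $\langle x,f_k(D)g\rangle=0$ for all $k$. Consider the complex measure $\mu=\sum_j\overline{x_j}g_j\,\delta_{z_j}$ on $\Gamma$. It is finite because $(x_jg_j)\in\ell^1$ by Cauchy--Schwarz, and it is supported on $\Gamma$ since $\mathcal Z\subset\Gamma$. By assumption $\int f_k\,d\mu=0$ for all $k$, so \eqref{E:fk-unif-linfty4} gives $\mu\perp C(\Gamma)$. The Riesz representation theorem then forces $\mu=0$. Since the $z_j$ are distinct and $g_j>0$, every $x_j=0$.

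I expect the main care to be needed in Step 1, specifically in ensuring that the Bessel bound for the whole of $\{D^{\lambda_k}g\}$ follows from Theorem~\ref{T:main1} via the splitting into $N$ subfamilies. The compact-perturbation argument in Step 2 and the measure argument in Step 3 are routine.
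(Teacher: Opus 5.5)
Your proof is correct, but it takes a different route from the paper's.

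\textbf{The paper's route.} It first uses the tail estimate $M^2\sum_{j\geq J}(1-|z_j|^2)$ (Lemma~\ref{L:truncation}) to get an outer frame on $V_J$. It then proves surjectivity of the full synthesis operator by restoring the rows $J-1,\dots,0$ one at a time. Density supplies a finite combination $\theta_b=\sum_k b_kf_k$ that is close to $(1-|z_{J-1}|^2)^{-1/2}$ at $z_{J-1}$ and uniformly small on the tail. Subtracting the minimal-norm solution $c=\Phi_J^*(\Phi_J\Phi_J^*)^{-1}\Phi_Jb$ then yields a preimage of a nonzero multiple of $\delta_{J-1}$.

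\textbf{Your route.} You treat the analysis operator globally. Your estimate in fact shows that $T_f-T_\lambda$ is Hilbert--Schmidt, since its Hilbert--Schmidt norm squared is $\sum_{j,k}|d_{kj}|^2g_j^2\leq M^2\|g\|^2$. So $T_f$ is a compact perturbation of an operator bounded below, hence has closed range and finite-dimensional kernel. The density hypothesis is used exactly once, in the annihilating-measure argument, to get injectivity.

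\textbf{What each approach buys.} Your argument is shorter and is essentially the compact-perturbation step the paper itself uses for Theorem~\ref{T:main1}. Because density makes completeness immediate, it shows that the paper does not actually need an analog of the completeness theorem from \cite{KM26} here. It also shows that, under the pointwise $\ell^2$ hypothesis, the frame property is equivalent to completeness of $\{f_k(D)g\}_{k\geq0}$. So the density assumption in $C(\Gamma)$, and with it the arc $\Gamma$, could be replaced by that weaker condition. The paper's proof, in contrast, avoids Fredholm theory and the Riesz representation theorem and stays at the level of the synthesis operator. It uses density only to approximate one explicit separating function at each step, which makes the restoration of the deleted rows explicit.
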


Below we will use the pointwise square-summability of $\{f_k(z)\}_k$, which follows from the 
hypotheses of the above theorem. Indeed, for each $z\in\DD$, the block-counting condition \eqref{E:counting_hyp} gives
\[
\sum_{k=0}^{\infty}|z|^{2\lambda_k}
\leq N\sum_{m=0}^{\infty}|z|^{2rm}
=\frac{N}{1-|z|^{2r}}.
\]
Consequently, \eqref{E:fk-ell2-perturbation-main} implies
\begin{equation}\label{E:fk-unif-linfty3}
\sum_{k=0}^{\infty}|f_k(z_j)|^2
\leq 2M^2+\frac{2N}{1-|z_j|^{2r}}<\infty,
\qquad j\geq0.
\end{equation}

\section{Carleson Sequences in a Stolz Domain}
\label{S:CSSD}

The goal of this section is to prove Theorem \ref{T:main1}. We begin, however, with the necessary properties of the Stolz domains and Carleson sequences in them.

In the literature, a more general Stolz condition is often considered. It is typically assumed that there is some finite set $A\subset \partial \DD$ such that
    \[
    \sup_{j\geq 0} \frac{\min\limits_{a \in A}\{|z_j-a|\}}{1-|z_j|}  < \infty.
    \]
Indeed, it is known that any separated sequence $\mathcal Z\subset \DD$ satisfying this general Stolz condition is a Carleson sequence. In this paper, however, we only consider Stolz domains accumulating on $1$, i.e.~$A=\{1\}$. This allows us to conclude that if $\mathcal Z \subset \mathcal S_\alpha$ for some $\alpha\geq 1$ then
$\lim\limits_{j\to \infty} z_j = 1$. It also allows us to establish the following crucial properties of $\mathcal Z \subset \mathcal S_\alpha$.

\begin{lemma}\label{L:Stolz}
    Let $\mathcal{S}_\alpha$ be a Stolz domain and define
    \[
    \DD_c=\{z=re^{i\theta}\in\DD\setminus\{0\}: \theta\in[-c,c]\},\qquad c\in[0,\pi).
    \]
   If $\mathcal{Z}\subset \mathcal{S}_\alpha\cap\DD_c$, then for every $R>0$ there exists a constant $C_R>0$ such that
   \[
   |1-z^s|\leq C_R(1-|z|^R),\qquad z\in\mathcal Z,
   \]
   for all $s\in[0,R]$.  In particular, for each fixed $r>0$ there is $\widetilde\alpha>0$ such that $\{z^r:z\in\mathcal Z\}\subset\mathcal S_{\widetilde\alpha}$.
\end{lemma}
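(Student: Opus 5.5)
Write $z=\rho e^{i\theta}\in\mathcal Z$ with $\rho=|z|$ and $|\theta|\le c<\pi$, so that $z^s=\rho^s e^{is\theta}$ by the principal branch. The plan is to split $1-z^s$ into a radial part and an angular part,
\[
|1-z^s|\le |1-\rho^s|+\rho^s|1-e^{is\theta}|\le (1-\rho^s)+s|\theta|,
\]
and to bound each piece by a multiple of $1-\rho$. Then $1-\rho$ is converted into $1-\rho^R$ using
\[
1-\rho^R\ge \min(1,R)\,\rho^{\max(R,1)}(1-\rho)
\]
(mean value theorem), together with a separate treatment of the region where $\rho$ is small.

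For the radial part, $1-\rho^s\le \max(s,1)(1-\rho)\le \max(R,1)(1-\rho)$ for $s\in[0,R]$, which is elementary. The angular part is the key step, and the Stolz condition enters here. Since $\mathcal Z\subset\mathcal S_\alpha$, we have $|1-z|\le\alpha(1-\rho)$. Also $|1-z|\ge|\operatorname{Im} z|=\rho|\sin\theta|$.

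When $\rho\ge 1/2$, the Stolz condition forces $\rho|\sin\theta|\le\alpha(1-\rho)$. I want $|\theta|\lesssim|\sin\theta|$, which needs $|\theta|$ bounded away from $\pi$; this is exactly where $\DD_c$ is used. Since $|\theta|\le c<\pi$, there is $\kappa_c>0$ with $|\sin\theta|\ge\kappa_c|\theta|$ on $[-c,c]$. Hence $|\theta|\le 2\alpha(1-\rho)/\kappa_c$, and so $s|\theta|\le 2R\alpha\kappa_c^{-1}(1-\rho)$.

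Combining, for $\rho\ge1/2$ we get $|1-z^s|\le K(1-\rho)\le K'(1-\rho^R)$, using $\rho^{\max(R,1)}\ge 2^{-\max(R,1)}$. For $\rho<1/2$, the numerator satisfies $|1-z^s|\le 2$, while $1-\rho^R\ge 1-2^{-R}>0$, so a constant works there too. Taking $C_R$ as the larger of the two constants gives the inequality uniformly in $s\in[0,R]$.

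For the final claim, fix $r>0$ and apply the inequality with $R=r$ and $s=r$. Then
\[
|1-z^r|\le C_r(1-|z|^r)=C_r(1-|z^r|).
\]
We also need $z^r\in\DD$, which holds since $|z^r|=\rho^r<1$. Therefore $z^r\in\mathcal S_{\widetilde\alpha}$ with $\widetilde\alpha=\max(C_r,1)$.

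The main obstacle is the angular bound $|\theta|\lesssim 1-\rho$. The Stolz condition alone only controls $|\sin\theta|$, and the sector restriction $c<\pi$ is what upgrades this to control of $\theta$ itself, which is what governs $z^s$ for noninteger $s$. The care needed is in making the constants uniform in $s\in[0,R]$.
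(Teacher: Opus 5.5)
Your proof is correct, and it takes a different route from the paper's.

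\textbf{Your route.} You work in polar coordinates and split $1-z^s$ into a radial piece $1-\rho^s$ and an angular piece $\rho^s(1-e^{is\theta})$. You divide the sequence at $|z|=1/2$. You then use the sector hypothesis quantitatively, through $|\sin\theta|\ge\kappa_c|\theta|$, to turn the Stolz bound on $\operatorname{Im}z$ into a bound on $\theta$.

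\textbf{The paper's route.} The paper instead splits according to distance from $1$. On a small disc $B_\varepsilon(1)$ that avoids $0$ and the branch cut, $w\mapsto w^s$ is holomorphic with derivative bounded uniformly in $s\in[0,R]$. This gives $|1-z^s|\le C_1|1-z|\le C_1\alpha(1-|z|)$ in one step. Outside that disc, the Stolz condition itself forces $|z|\le 1-\varepsilon/\alpha$, and there the trivial bound $|1-z^s|\le 2$ suffices.

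\textbf{What each buys.} Your version is completely explicit: every constant can be written down. However, your $C_R$ depends on $c$ and blows up as $c\to\pi$. The paper's constant depends only on $R$ and $\alpha$. This shows that the sector hypothesis is needed only to make the principal branch well defined on $\mathcal Z$, not for the estimate itself.

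\textbf{On your closing remark.} Your claim that $c<\pi$ is what ``upgrades'' control of $\sin\theta$ to control of $\theta$ is an artifact of your threshold $\rho\ge1/2$. Suppose you split instead at $\rho\ge 1-1/(2\alpha)$. Then the Stolz condition gives $|1-z|\le 1/2$, hence $\operatorname{Re}z\ge1/2$ and $|\theta|<\pi/2$. Jordan's inequality $|\sin\theta|\ge(2/\pi)|\theta|$ then applies with no reference to $c$. Below that threshold, $1-\rho^R$ is bounded below by a constant depending only on $\alpha$ and $R$.
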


\begin{proof}
Fix $R>0$. Choose $0<\varepsilon<1$ so small that the closed disc
$\overline{B_\varepsilon(1)}$ is disjoint from the origin and from the branch
cut $(-\infty,0]$. If $z\in\mathcal Z\cap B_\varepsilon(1)$, then the segment
$\Gamma_z$ joining $1$ to $z$ is contained in $B_\varepsilon(1)$. Hence
\[
\sup_{0\leq s\leq R}\sup_{w\in\Gamma_z}
\left|\frac{d}{dw}w^s\right|<\infty
\]
uniformly in $z$. It follows that, for some constant $C_1>0$,
\[
|1-z^s|\leq C_1|1-z|
\leq C_1\alpha(1-|z|),
\qquad 0\leq s\leq R.
\]
For $0\leq t<1$,
\[
1-t\leq \max\{1,R^{-1}\}(1-t^R),
\]
so the desired estimate holds on $\mathcal Z\cap B_\varepsilon(1)$.

Now suppose that $z\in\mathcal Z\setminus B_\varepsilon(1)$. The Stolz
condition gives
\[
1-|z|\geq\frac{|1-z|}{\alpha}\geq\frac{\varepsilon}{\alpha},
\]
and therefore $|z|\leq q:=1-\varepsilon/\alpha<1$ after decreasing
$\varepsilon$, if necessary. Thus
\[
|1-z^s|\leq2
\leq\frac{2}{1-q^R}(1-|z|^R),
\qquad 0\leq s\leq R.
\]
Combining the two regions proves the first assertion. Taking $s=R=r$ gives
\[
|1-z^r|\leq C_r(1-|z|^r),
\]
which is the Stolz condition for the powered set, with a possibly larger
constant.
\end{proof}

 The outline of the proof of Theorem \ref{T:main1} is essentially the same as in \cite{KM26} for the case of $\mathcal Z \subset (0,1)$. First, we show that if $\{D^kg\}_{k\geq0}$ is a Carleson frame whose spectrum is separated and lies in a Stolz domain, then $\{D^{rk}g\}_{k\geq0}$ is a frame for $0<r<\pi/c$. We then show that, for every choice $j_k\in[0,r)$, the sequence $\{D^{rk+j_k}g\}_{k\geq0}$ is an outer frame on a sufficiently far spectral tail. Finally, we use this result to show that such outer frames are, in fact, frames for $\ell^2(\N_0)$.

\begin{proposition}\label{P:carleson_powers}
    Let $\mathcal{S}_\alpha$ be a Stolz domain and $c \in [0,\pi)$. Let $\{z_j\}_{j\geq 0} = \mathcal{Z} \subset \mathcal{S}_\alpha \cap \DD_c$ be a separated sequence.
    Then the sequence $\{z_j^r\}_{j \geq 0}$ is a Carleson sequence for every $0 < r < \pi/c$. Consequently, if $D$ is the Carleson operator corresponding to $\mathcal{Z}$, then $\{D^{rk}g\}_{k=0}^\infty$ is a Carleson frame for $0 < r < \pi/c$.
\end{proposition}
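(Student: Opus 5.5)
The plan is to show that $\mathcal W=\{w_j\}_{j\geq0}$, $w_j:=z_j^r$, is a separated sequence lying in a Stolz domain; the Carleson property then follows from the criterion quoted in the introduction \cite[pp~157-158]{NN2012}. The frame statement is then immediate. Indeed, $D^r$ is the diagonal operator with diagonal $\mathcal W$, so $D^{rk}=(D^r)^k$. Moreover, the canonical generating vector of $\mathcal W$ is $h=(\sqrt{1-|z_j|^{2r}})_j$, and
\[
\frac{1-|z_j|^{2r}}{1-|z_j|^2}
\]
is bounded above and below by positive constants, since $|z_j|$ ranges over a subset of $(0,1)$ bounded away from $0$ near accumulation. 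So $g=Th$ with $T$ a bounded invertible diagonal operator commuting with $D^r$. Since $\{(D^r)^k h\}_k$ is a Carleson frame, $\{D^{rk}g\}_k=\{T(D^r)^kh\}_k$ is a frame as well.

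\emph{Step 1 (the image lies in $\DD$ and in a Stolz domain).} Write $z_j=\rho_je^{i\theta_j}$ with $|\theta_j|\le c$. Then $w_j=\rho_j^re^{ir\theta_j}$ with $|r\theta_j|\le rc<\pi$, so the $w_j$ lie in $\DD\setminus\{0\}$, and the principal branch is used consistently. The second assertion of Lemma~\ref{L:Stolz} gives $\mathcal W\subset\mathcal S_{\widetilde\alpha}$.

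\emph{Step 2 (separation of $\mathcal W$).} This is the main obstacle, and it is exactly where $r<\pi/c$ enters. Without that condition, distinct $z_j$ could collapse to the same $w$ under the power map. Note that the hypothesis $rc<\pi$ makes $z\mapsto z^r$ injective on $\DD_c$.

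I would split the sequence into two regions.

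\emph{Far from $1$:} Only finitely many $z_j$ satisfy $|1-z_j|\ge\varepsilon$, because $z_j\to1$. Their images are distinct by injectivity and lie in a compact subset of $\DD$, so their pseudo-hyperbolic distances to each other are bounded below. Their distances to the remaining points are also bounded below, because the remaining points accumulate only at $1$ while these images stay at distance at least some $\eta$ from $1$ for small $\varepsilon$.

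\emph{Near $1$:} For $z_j,z_k\in B_\varepsilon(1)$, I would use that $\phi(z)=z^r$ is holomorphic near $1$ with $\phi'(1)=r\neq0$. Hence $\phi$ is bi-Lipschitz on $B_\varepsilon(1)$ for small $\varepsilon$:
\[
|w_j-w_k|\asymp|z_j-z_k|.
\]
On the Stolz side, for points in $\mathcal S_\alpha$ we have $|1-\overline{z_k}z_j|\asymp(1-|z_j|)+(1-|z_k|)+|z_j-z_k|$. The analogous statement holds for $\mathcal S_{\widetilde\alpha}$, and $1-|w|\asymp1-|z|$ by the lower bound $1-t\lesssim 1-t^r$ and its reverse. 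Comparing the two pseudo-hyperbolic distances term by term then shows that separation of $\mathcal Z$ transfers to $\mathcal W$.

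A cleaner alternative, if the comparison above gets messy, is to use the standard estimate $\rho(\phi(z),\phi(\zeta))\ge c\,\rho(z,\zeta)$ for maps that are conformal and bi-Lipschitz near a boundary point, restricted to nontangential regions.

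\emph{Step 3 (conclusion).} Separation together with the Stolz condition gives that $\mathcal W$ is Carleson, and the frame conclusion follows by the diagonal rescaling argument described at the start.
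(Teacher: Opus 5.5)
Your proposal is correct and follows the paper's proof essentially step for step. Both arguments get Stolz membership of $\{z_j^r\}$ from Lemma~\ref{L:Stolz}, establish separation by splitting off finitely many points away from $1$ and using $\phi'(1)=r\neq0$ near $1$, obtain injectivity from $rc<\pi$, and finish with the same bounded invertible diagonal rescaling between $g$ and $(\sqrt{1-|z_j|^{2r}})_j$. The only difference is minor: you bound the denominator through the general comparison $|1-\overline{z}w|\asymp(1-|z|)+(1-|w|)+|z-w|$ on $\DD$ together with $1-|z|^r\asymp 1-|z|$, whereas the paper uses $\overline{z^r}\,w^r=(\overline{z}w)^r$ near $1$ and the Lipschitz bound for $u\mapsto u^r$.
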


    \begin{proof}
Fix \(0<r<\pi/c\), and write
\[
\rho_{\DD}(z,w)
    :=
    \left|
        \frac{z-w}{1-\overline zw}
    \right|
\]
for the pseudo-hyperbolic distance. Since \(\mathcal Z\) is
separated, there is a constant
\[
\delta_{\mathcal Z}
    :=
    \inf_{i\neq j}\rho_{\DD}(z_i,z_j)>0.
\]

By Lemma~\ref{L:Stolz}, there exists
\(\widetilde{\alpha}>0\) such that
\[
\mathcal Z^{(r)}
    :=
    \{z_j^r:j\geq0\}
    \subset \mathcal S_{\widetilde{\alpha}}.
\]
It therefore suffices to prove that \(\mathcal Z^{(r)}\) is
separated.

Consider the principal-branch power map
\[
F_r(z)=z^r.
\]
The function \(F_r\) is holomorphic in a neighborhood of \(1\), and
\[
F_r'(1)=r\neq0.
\]
Thus, by the holomorphic inverse function theorem, there are
neighborhoods \(U\) and \(V\) of \(1\) such that
\[
F_r:U\longrightarrow V
\]
is biholomorphic. Let \(G_r:V\to U\) denote its inverse. Choose a
disc \(V_0\) centered at \(1\) with
\(\overline{V_0}\subset V\), and put
\[
U_0=G_r(V_0).
\]
After shrinking \(V_0\), if necessary, we may assume that \(V_0\)
is convex and that \(U_0\) lies in a sufficiently small neighborhood
of \(1\).

Since \(G_r'\) is bounded on \(\overline{V_0}\), there is a constant
\(C_r>0\) such that, for all \(z,w\in U_0\),
\[
\begin{aligned}
|z-w|
    &=
    \big|G_r(F_r(z))-G_r(F_r(w))\big|  \\
    &\leq
    C_r\big|F_r(z)-F_r(w)\big|
    =
    C_r|z^r-w^r|.
\end{aligned}
\]
Consequently,
\begin{equation}
\label{E:power-numerator-comparison}
|z^r-w^r|
    \geq C_r^{-1}|z-w|,
    \qquad z,w\in U_0.
\end{equation}

Shrinking \(U_0\) once more, we may also assume that, whenever
\(z,w\in U_0\), the point
\[
u=\overline zw
\]
belongs to a convex neighborhood \(W_0\) of \(1\) on which the
principal logarithm is analytic and satisfies
\[
\operatorname{Log}(\overline zw)
    =
    \overline{\operatorname{Log}z}
    +
    \operatorname{Log}w.
\]
It follows that
\[
\overline{z^r}\,w^r
    =
    (\overline zw)^r.
\]
Since the derivative of \(u\mapsto u^r\) is bounded on
\(\overline{W_0}\), there is a constant \(K_r>0\) such that
\begin{equation}
\label{E:power-denominator-comparison}
\big|1-\overline{z^r}\,w^r\big|
    =
    \big|1-(\overline zw)^r\big|
    \leq
    K_r|1-\overline zw|,
    \qquad z,w\in U_0.
\end{equation}

Because \(z_j\to1\), there is \(J\in\N\) such that \(z_j\in U_0\)
for every \(j\geq J\). Combining
\eqref{E:power-numerator-comparison} and
\eqref{E:power-denominator-comparison}, we obtain, for distinct
\(i,j\geq J\),
\[
\begin{aligned}
\rho_{\DD}(z_i^r,z_j^r)
    &=
    \frac{|z_i^r-z_j^r|}
         {|1-\overline{z_i^r}z_j^r|} \\
    &\geq
    \frac{1}{C_rK_r}
    \frac{|z_i-z_j|}
         {|1-\overline{z_i}z_j|} \\
    &\geq
    \frac{\delta_{\mathcal Z}}{C_rK_r}>0.
\end{aligned}
\]
Thus the powered tail \(\{z_j^r\}_{j\geq J}\) is separated.

We next verify that the powered points are distinct. Write
\[
z_j=\varrho_j e^{i\theta_j},
\qquad
\theta_j\in[-c,c].
\]
If \(z_i^r=z_j^r\), then \(\varrho_i=\varrho_j\) and
\[
r(\theta_i-\theta_j)\in2\pi\mathbb Z.
\]
However,
\[
|r(\theta_i-\theta_j)|
    \leq 2rc<2\pi,
\]
so \(r(\theta_i-\theta_j)=0\). Hence \(\theta_i=\theta_j\), and
therefore \(z_i=z_j\). Since the original sequence has distinct
points, so does the powered sequence.

Finally, for every fixed \(i<J\),
\[
\rho_{\DD}(z_i^r,z_j^r)
    \longrightarrow
    \frac{|z_i^r-1|}
         {|1-\overline{z_i^r}|}
    =1
    \qquad (j\to\infty).
\]
Together with the separation of the tail and the fact that there
are only finitely many indices below \(J\), this shows that the full
sequence \(\{z_j^r\}_{j\geq0}\) is separated. Since it also lies in
the Stolz domain \(\mathcal S_{\widetilde{\alpha}}\), it is a
Carleson sequence.

Let
\[
g_r
    :=
    \left(
        \sqrt{1-|z_j|^{2r}}
    \right)_{j\geq0}.
\]
Because \(\{z_j^r\}_{j\geq0}\) is a Carleson sequence,
\[
\{D^{rk}g_r\}_{k\geq0}
    =
    \{(D^r)^kg_r\}_{k\geq0}
\]
is its canonical Carleson frame.

Define the diagonal operator \(P\) by
\[
P\delta_j
    =
    \sqrt{
        \frac{1-|z_j|^2}
             {1-|z_j|^{2r}}
    }\,
    \delta_j.
\]
The scalar function
\[
p_r(t)
    =
    \sqrt{
        \frac{1-t^2}
             {1-t^{2r}}
    },
    \qquad 0\leq t<1,
\]
extends continuously to \([0,1]\) by setting
\[
p_r(1)=\frac{1}{\sqrt r}.
\]
It is strictly positive on \([0,1]\), and is therefore bounded above and
bounded away from zero there. Consequently, \(P\) is bounded and
invertible. Moreover, \(P\) commutes with \(D\) and
\[
Pg_r=g.
\]
It follows that
\[
D^{rk}g
    =
    D^{rk}Pg_r
    =
    PD^{rk}g_r,
    \qquad k\geq0.
\]
Hence \(\{D^{rk}g\}_{k\geq0}\) is the image of the frame
\(\{D^{rk}g_r\}_{k\geq0}\) under the bounded invertible operator
\(P\), and is therefore itself a frame.
\end{proof}

\begin{remark}
The sector hypothesis is used to prevent collisions under the map $z\mapsto
z^r$. More generally, the same proof works whenever the finitely many powered
points outside the tail remain distinct. The restriction $r<\pi/c$ is natural
and, as Remark~\ref{R:sharpness} shows, sharp in the stated uniform form.
\end{remark}

For the next step, we use the following perturbation result, which is an analog of \cite[Lemma~2.1]{KM26}.

\begin{lemma}\label{L:truncation}
Let $\{z_j\}_{j\geq0}$ be a Carleson sequence, and let $D$ and $g$ be the
associated Carleson operator and canonical generating vector. Assume that
$\{h_k(D)g\}_{k\geq0}$ is a frame with lower frame bound $A>0$. Suppose that
there are $M\geq0$ and $J'\in\N_0$ such that
\begin{equation}\label{E:ell2-perturb-truncation}
\sum_{k=0}^{\infty}|f_k(z_j)-h_k(z_j)|^2\leq M^2,
\qquad j\geq J'.
\end{equation}
Then there is an integer $J\geq J'$ such that
$\{P_{V_J}f_k(D)g\}_{k\geq0}$ is a frame for
\[
V_J=\overline{\operatorname{span}}\{\delta_j:j\geq J\}.
\]
Equivalently, $\{f_k(D)g\}_{k\geq0}$ is an outer frame for $V_J$.
\end{lemma}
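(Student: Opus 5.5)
The plan is a block-diagonal perturbation estimate on a far tail. For $x\in V_J$, consider the analysis operators
\[
T_h x=\bigl(\langle x,h_k(D)g\rangle\bigr)_k,
\qquad
T_f x=\bigl(\langle x,f_k(D)g\rangle\bigr)_k .
\]
Since $\langle P_{V_J}f_k(D)g,x\rangle=\langle f_k(D)g,x\rangle$ for $x\in V_J$, it suffices to prove two bounds on $V_J$:
\[
\|T_f x\|\geq c\|x\|,\qquad \|T_f x\|\leq C\|x\|.
\]
The lower frame bound of $\{h_k(D)g\}$ gives $\|T_hx\|^2\geq A\|x\|^2$ for all $x$, in particular for $x\in V_J$. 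By the triangle inequality it is then enough to show that the difference operator $E=T_f-T_h$, restricted to $V_J$, has norm at most $\sqrt{A}/2$ once $J$ is large. The upper bound will follow as well, since $T_h$ is bounded and $E|_{V_J}$ will be bounded.

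To estimate $E$, write $x=\sum_{j\geq J}x_j\delta_j$. Then
\[
(Ex)_k=\sum_{j\geq J}\overline{(f_k(z_j)-h_k(z_j))}\,\sqrt{1-|z_j|^2}\,x_j .
\]
Apply Cauchy--Schwarz in $j$, then sum over $k$ and use \eqref{E:ell2-perturb-truncation}:
\[
\|Ex\|^2\leq\sum_k\Bigl(\sum_{j\geq J}|f_k(z_j)-h_k(z_j)|^2(1-|z_j|^2)\Bigr)\|x\|^2
\leq M^2\sum_{j\geq J}(1-|z_j|^2)\,\|x\|^2 .
\]
This is the Hilbert--Schmidt bound. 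Because $\{z_j\}$ is a Carleson sequence, it satisfies the Blaschke condition $\sum_j(1-|z_j|)<\infty$. So the tail sum $\sum_{j\geq J}(1-|z_j|^2)$ tends to $0$, and we can choose $J\geq J'$ with
\[
M^2\sum_{j\geq J}(1-|z_j|^2)\leq A/4 .
\]
This gives the required bound on $\|E|_{V_J}\|$.

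With that $J$, for $x\in V_J$ we get
\[
\|T_fx\|\geq\sqrt{A}\,\|x\|-\tfrac{\sqrt A}{2}\|x\|=\tfrac{\sqrt A}{2}\|x\|,
\qquad
\|T_fx\|\leq(\sqrt B+\sqrt A/2)\|x\|,
\]
where $B$ is the upper bound of the $h$-frame. Hence $\{P_{V_J}f_k(D)g\}$ is a frame for $V_J$, which is the same as saying $\{f_k(D)g\}$ is an outer frame for $V_J$.

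The main obstacle is that $f_k(D)g$ need not obviously be Bessel on the coordinates $j<J$. The lemma sidesteps this by only claiming the outer-frame property after projecting onto $V_J$, so no control on the head coordinates is needed. The only real input is the Blaschke summability of $1-|z_j|$, which makes the Hilbert--Schmidt tail small.
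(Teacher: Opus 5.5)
Your proof is correct and takes essentially the same approach as the paper. The paper bounds the same quantity, $\sum_k\|P_{V_J}(f_k(D)g-h_k(D)g)\|^2\le M^2\sum_{j\ge J}(1-|z_j|^2)$, which is the squared Hilbert--Schmidt norm of your $E|_{V_J}$; it makes this smaller than $A$ via the Blaschke condition and then cites the standard frame perturbation lemma \cite[Lemma~1.2]{KM26}, whereas you prove that perturbation step inline by the triangle inequality with the cruder threshold $A/4$.
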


\begin{proof}
For any $J\geq J'$, we estimate the perturbation required by the standard frame perturbation lemma as follows:
\begin{align*}
\sum_{k=0}^{\infty}\left\|P_{V_J}\big(f_k(D)g-h_k(D)g\big)\right\|^2
&=\sum_{k=0}^{\infty}\sum_{j=J}^{\infty}(1-|z_j|^2)\left|f_k(z_j)-h_k(z_j)\right|^2\\
&=\sum_{j=J}^{\infty}(1-|z_j|^2)\sum_{k=0}^{\infty}\left|f_k(z_j)-h_k(z_j)\right|^2\\
&\leq M^2\sum_{j=J}^{\infty}(1-|z_j|^2).
\end{align*}
Since every Carleson sequence satisfies the Blaschke condition, the last tail can be made smaller than $A$ by choosing $J$ sufficiently large.  The sequence $\big\{P_{V_J}h_k(D)g\big\}_{k\geq0}$ has lower frame bound at least $A$ on $V_J$, and \cite[Lemma~1.2]{KM26} therefore implies that $\big\{P_{V_J}f_k(D)g\big\}_{k\geq0}$ is a frame for $V_J$.
\end{proof}

We note that the $\ell^2$ perturbation condition \eqref{E:ell2-perturb-truncation} in Lemma \ref{L:truncation} is not automatic for arbitrary complex Carleson spectra. For instance, if $z_j=(-1)^j(1-2^{-(j+1)})$, then
\[
\sum_{k=0}^{\infty}\left|z_j^{2k}-z_j^{2k+1}\right|^2
=\frac{|1-z_j|^2}{1-|z_j|^4},
\]
which is unbounded along the odd indices.  The situation improves when the spectrum lies in a Stolz domain.

\begin{lemma}\label{L:Stolz_outer_frames}
Assume that $\mathcal Z$ is a separated sequence contained in
$\mathcal S_\alpha\cap\DD_c$ for some $\alpha\geq1$ and $c\in[0,\pi)$.
If $0<r<\pi/c$ and $j_k\in[0,r)$ for every $k\geq0$, then
$\{D^{rk+j_k}g\}_{k\geq0}$ is an outer frame for
$V_J=\overline{\operatorname{span}}\{\delta_j:j\geq J\}$ for some integer
$J$.
\end{lemma}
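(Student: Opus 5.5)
The plan is to apply Lemma~\ref{L:truncation} with $h_k(z)=z^{rk}$ and $f_k(z)=z^{rk+j_k}$. By Proposition~\ref{P:carleson_powers}, $\{D^{rk}g\}_{k\geq0}$ is a frame for $\ell^2(\N_0)$; call its lower frame bound $A>0$. It then suffices to find $M\geq0$, and to take $J'=0$, so that
\[
\sum_{k=0}^{\infty}\bigl|z_j^{rk+j_k}-z_j^{rk}\bigr|^2\leq M^2,\qquad j\geq0 .
\]
Lemma~\ref{L:truncation} then produces the integer $J$ for which $\{D^{rk+j_k}g\}_{k\geq0}$ is an outer frame for $V_J$.

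The first point to settle is the branch. Since $z_j\in\DD_c$ and $rk+j_k\geq0$, the principal-branch powers satisfy
\[
z^{rk+j_k}=\exp\bigl((rk+j_k)\operatorname{Log} z\bigr)=z^{rk}z^{j_k}.
\]
This holds because the exponent is a real multiple of $\operatorname{Log} z$, so no branch issue arises; here $z^{rk}$ means $\exp(rk\operatorname{Log}z)$, consistent with the definition of $D^{rk}$. Likewise $|z^{rk}|=|z|^{rk}$. Hence
\[
\bigl|z^{rk+j_k}-z^{rk}\bigr|=|z|^{rk}\,\bigl|1-z^{j_k}\bigr|.
\]

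Next apply Lemma~\ref{L:Stolz} with $R=r$. Since $j_k\in[0,r)\subset[0,R]$, this gives $|1-z^{j_k}|\leq C_r(1-|z|^r)$ uniformly in $k$ and in $z\in\mathcal Z$. Summing the geometric series,
\[
\sum_{k\geq0}|z|^{2rk}\,C_r^2(1-|z|^r)^2=\frac{C_r^2(1-|z|^r)^2}{1-|z|^{2r}}=C_r^2\,\frac{1-|z|^r}{1+|z|^r}\leq C_r^2 .
\]
So $M=C_r$ works for all $j$, and the lemma follows.

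The only delicate step is the uniformity in $s=j_k$ of the bound $|1-z^s|\lesssim 1-|z|^r$. This is exactly what the Stolz hypothesis supplies via Lemma~\ref{L:Stolz}, and the counterexample preceding the statement shows it is essential. Once that bound is available, the rest is a direct geometric sum.
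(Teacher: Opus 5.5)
Your proposal is correct and follows the paper's proof essentially verbatim. The ingredients are the same: Proposition~\ref{P:carleson_powers} for the frame $\{D^{rk}g\}_{k\geq0}$, Lemma~\ref{L:Stolz} with $R=r$ for the uniform bound $|1-z^{j_k}|\leq C_r(1-|z|^r)$, the geometric sum giving $M=C_r$, and Lemma~\ref{L:truncation} with $h_k(z)=z^{rk}$ and $f_k(z)=z^{rk+j_k}$. Your explicit check that $z^{rk+j_k}=z^{rk}z^{j_k}$ and $|z^{rk}|=|z|^{rk}$ under the principal branch is a step the paper uses without stating it.
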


\begin{proof}

    By Proposition~\ref{P:carleson_powers}, the sequence
$\{D^{rk}g\}_{k\geq0}$ is a frame for every $0<r<\pi/c$.
By Lemma~\ref{L:Stolz},

\[
|1-z_j^{j_k}|\leq C_r(1-|z_j|^r),\qquad j,k\geq0.
\]
Consequently,
\begin{align*}
\sum_{k=0}^{\infty}\left|z_j^{rk+j_k}-z_j^{rk}\right|^2
&\leq C_r^2(1-|z_j|^r)^2\sum_{k=0}^{\infty}|z_j|^{2rk}\\
&=C_r^2\frac{(1-|z_j|^r)^2}{1-|z_j|^{2r}}
\leq C_r^2.
\end{align*}
Lemma \ref{L:truncation}, with $h_k(z)=z^{rk}$ and $f_k(z)=z^{rk+j_k}$, gives the desired outer frame.
\end{proof}

 Given a Carleson sequence $\mathcal Z$ and the corresponding operator $D$ and vector $g$, we shall denote by $\Phi_{\mathcal{Z}}^\Lambda$ the synthesis operator associated with the sequence $\{D^{rk+j_k} g\}_{k \geq 0}$.
   Observe that it has a matrix representation whose $(j,k)$-entry is
\begin{equation}
       \label{E:synth_matr}
        z^{rk+j_k}_j\sqrt{1-|z_j|^2}.
\end{equation}
  The following proposition, which serves as an extension of \cite[Proposition~2.7]{KM26} to the current setting, is the last key piece that we require for the proof of Theorem \ref{T:main1}.

\begin{proposition}\label{P:fractional-replacement}
Given $\alpha\geq1$, $c\in[0,\pi)$, and $0<r<\pi/c$, let
$\Lambda=\{rk+j_k:k\geq0\}$, where $j_k\in[0,r)$, and let
$\mathcal Z_J=\{z_j\}_{j\geq J}\subset\DD_c$. If
$\Phi^\Lambda_{\mathcal Z_J}$ is bounded and surjective, then one can choose
$z'_0,\ldots,z'_{J-1}\in\mathcal S_\alpha\cap\DD_c$, avoiding at most
countably many points at each step, such that
$\Phi^\Lambda_{\mathcal Z'}$ is surjective, where
\[
\mathcal Z'=\{z'_0,\ldots,z'_{J-1}\}\cup\mathcal Z_J.
\]
\end{proposition}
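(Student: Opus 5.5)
We add the points one at a time, going down from index $J-1$ to $0$. At each step we show that if $\Phi^\Lambda_{\mathcal W}$ is surjective for some sequence $\mathcal W$, then $\Phi^\Lambda_{\{w\}\cup\mathcal W}$ is surjective for all but countably many $w\in\mathcal S_\alpha\cap\DD_c$. The set $\mathcal S_\alpha\cap\DD_c$ is uncountable, since it contains a nondegenerate segment of $(0,1)$. So we can always choose an admissible point. Iterating $J$ times then proves the proposition.

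Boundedness of the enlarged operator is immediate: it only adds one row, $\bigl(w^{rk+j_k}\sqrt{1-|w|^2}\bigr)_{k\ge0}$, and this row lies in $\ell^2$ because $\sum_k|w|^{2(rk+j_k)}\le(1-|w|^{2r})^{-1}$.

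For surjectivity, write $\Phi=\Phi^\Lambda_{\mathcal W}:\ell^2(\N_0)\to\ell^2(\mathcal W)$, which is onto. Let $K=\ker\Phi$. The enlarged operator $\widetilde\Phi$ sends $a\in\ell^2(\N_0)$ to $(\varphi_w(a),\Phi a)$, where
\[
\varphi_w(a)=\sqrt{1-|w|^2}\sum_k a_k w^{rk+j_k}.
\]
Surjectivity of $\widetilde\Phi$ is equivalent to $\varphi_w$ not vanishing identically on $K$. Indeed, given a target $(\beta,y)$, first choose $a_0$ with $\Phi a_0=y$. Then correct by an element $b\in K$ with $\varphi_w(b)=\beta-\varphi_w(a_0)$, which exists as soon as $\varphi_w|_K\ne0$.

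So the task is to show the exceptional set
\[
E=\{w\in\DD_c\setminus\{0\}:\ \varphi_w|_K\equiv0\}
\]
is countable. For each $b\in K$, set $F_b(w)=\sum_k b_k w^{rk+j_k}$ with the principal branch. This is a generalized Dirichlet series in $\log w$: putting $w=e^{\zeta}$ gives $\sum_k b_k e^{\lambda_k\zeta}$, which converges absolutely and is holomorphic in $\zeta$ on $\operatorname{Re}\zeta<0$. Hence $F_b$ is holomorphic on the slit disc $\DD\setminus(-1,0]$, which contains $\DD_c$ because $c<\pi$.

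If $F_b\not\equiv0$ for some $b\in K$, its zero set in the connected domain $\DD\setminus(-1,0]$ is discrete, hence countable, and $E$ is contained in it. So it suffices to produce one $b\in K$ with $F_b\not\equiv0$. To see that $F_b\equiv0$ forces $b=0$, use the strictly increasing exponents $\lambda_k=rk+j_k$ (note $\lambda_{k+1}-\lambda_k>0$). Restrict to $w=t\in(0,1)$ and let $t\to0^+$. Dividing by $t^{\lambda_0}$ shows $b_0=0$, and induction with dominated convergence, using $\lambda_k\ge\lambda_{m}+(k-m-1)r$ in the tail, shows every $b_k=0$. Thus $F_b\equiv0$ if and only if $b=0$.

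The main obstacle is showing $K\ne\{0\}$. This is where the fractional exponents and the arbitrary shifts $j_k$ matter, and it follows the analogue of \cite[Proposition~2.7]{KM26}. I would argue as follows. Suppose $K=\{0\}$. Then $\Phi$ is a bijection, so $\{D^{\lambda_k}g\}$ restricted to $\mathcal W$ would be a Riesz basis, in particular $\ell^2$-independent. But we know more from Lemma \ref{L:Stolz_outer_frames} and the frame property: a frame of this type on a Carleson spectrum has infinite excess. Concretely, I would exhibit a nonzero kernel element directly. Take a finite combination $\sum_{k\le n}b_kw^{\lambda_k}$ vanishing at $n$ prescribed points of $\mathcal W$ (a nonzero solution exists by dimension count). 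Then use surjectivity of $\Phi$ restricted to the coordinates $k>n$, obtained from the tail-shift argument as in \cite{KM26}, to correct the remaining values to zero while keeping $b_0\ne0$.

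If this direct route is messy, the fallback is to note that $\Phi^*$ is bounded below but not onto. Its range consists of sequences $\bigl(\sum_j c_j\sqrt{1-|w_j|^2}\,w_j^{\lambda_k}\bigr)_k$, whose associated series vanish to high order at $0$ only trivially. Testing against a unit vector $\delta_0$ shows $\delta_0\notin\operatorname{ran}\Phi^*$, hence $K\ne\{0\}$.

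Once $K\ne\{0\}$ is in hand, the countability of $E$ follows from the previous paragraph, and the induction closes.
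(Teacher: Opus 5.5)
Most of your argument matches the paper's proof and is correct. You add one row at a time. You note that the augmented operator is onto once the new row functional does not vanish on $K=\ker\Phi$. You control the bad points through the zero set of $F_b(w)=\sum_k b_k w^{\lambda_k}$, which is analytic on $\DD\setminus(-1,0]$ and not identically zero for $b\neq0$.

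\textbf{The gap.} The step you flagged as the main obstacle, $K\neq\{0\}$, is not established by any of your three routes.
\begin{enumerate}
\item The ``infinite excess'' of such frames is asserted, not proved.
\item The direct construction needs $\Phi$ to remain surjective after deleting the columns $k\le n$. For a frame this is equivalent to completeness of $\{\Phi e_k\}_{k>n}$ in $\ell^2(\mathcal W)$. That is a nontrivial M\"untz-type completeness statement, which you leave unproved behind a reference to a ``tail-shift argument''.
\item The fallback is circular. ``$\Phi^*$ is not onto'' is equivalent to $K\neq\{0\}$. Moreover, $\delta_0\notin\operatorname{ran}\Phi^*$ is equivalent to $\Phi e_0\in\overline{\operatorname{span}}\{\Phi e_k:k\geq1\}$, again a completeness claim. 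The remark about series vanishing to high order at $0$ gives no contradiction from $\Phi^*c=\delta_0$.
\end{enumerate}

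\textbf{The missing idea.} It is elementary, and you were one line from it when you observed that $K=\{0\}$ would make $\Phi$ a bijection (a Riesz basis). By the bounded inverse theorem, $\Phi$ would then be bounded below, so $\|\Phi e_k\|\geq\|\Phi^{-1}\|^{-1}>0$ for every $k$. But
\[
\|\Phi e_k\|^2=\sum_{w\in\mathcal W}(1-|w|^2)\,|w|^{2\lambda_k}\longrightarrow0 \qquad (k\to\infty)
\]
by dominated convergence. Fix $k_0$. For $k\geq k_0$, each term is at most $(1-|w|^2)|w|^{2\lambda_{k_0}}$, which is summable because it is the squared $k_0$-th column of the bounded operator $\Phi$. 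Each term tends to $0$, since $0<|w|<1$ and $\lambda_k\to\infty$. Hence $\Phi$ is not injective and $K\neq\{0\}$; this is exactly how the paper argues. With this inserted, the rest of your proof goes through as written.
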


\begin{proof}
It is enough to show that one row can be added while preserving surjectivity.
Suppose that $\Phi^\Lambda_{\mathcal W}$ is bounded and surjective, where
$\mathcal W$ consists of the infinite tail together with finitely many
replacement points already chosen.

We first show that the column norms tend to zero. The exponents $\lambda_k=rk+j_k$
are strictly increasing and tend to infinity. Fix $k_0$. For $k\geq k_0$ and
$w\in\mathcal W$,
\[
(1-|w|^2)|w|^{2\lambda_k}
\leq(1-|w|^2)|w|^{2\lambda_{k_0}}.
\]
The sequence on the right is summable because it is the square of the
$k_0$-th column of the bounded operator $\Phi^\Lambda_{\mathcal W}$.
Dominated convergence therefore yields
\[
\|\Phi^\Lambda_{\mathcal W}e_k\|\longrightarrow0.
\]
If $\Phi^\Lambda_{\mathcal W}$ were injective, then surjectivity and the
bounded inverse theorem would make it bounded below, contradicting the last
limit. Hence we may choose
\[
0\neq a=(a_k)_{k\geq0}\in\ker\Phi^\Lambda_{\mathcal W}.
\]

On the slit disc $\Omega=\DD\setminus(-1,0]$, define
\[
\vartheta_a(z)=\sum_{k=0}^{\infty}a_kz^{\lambda_k}
\]
using the principal branch. If $K\subset\Omega$ is compact and
$|z|\leq q<1$ on $K$, then Cauchy--Schwarz gives
\[
\sum_{k=0}^{\infty}|a_k||z|^{\lambda_k}
\leq\|a\|_2\left(\sum_{k=0}^{\infty}q^{2rk}\right)^{1/2}.
\]
Thus the series converges uniformly on compact subsets of $\Omega$, so
$\vartheta_a$ is analytic there. It is not identically zero. Indeed, let $m$
be the first index for which $a_m\neq0$. If $\vartheta_a(e^{-x})=0$ for all
sufficiently large $x$, then
\[
0=e^{\lambda_mx}\vartheta_a(e^{-x})
=a_m+\sum_{k>m}a_ke^{-(\lambda_k-\lambda_m)x}.
\]
The final sum tends to zero by Cauchy--Schwarz and the linear growth of
$\lambda_k$, which would force $a_m=0$. Hence the zeros of $\vartheta_a$ are
discrete in $\Omega$ and therefore countable.

Choose the next point $z'\in\mathcal S_\alpha\cap\DD_c$ outside this zero set
and outside the previously chosen points. The associated new row is a bounded
functional on $\ell^2$, since
\[
\sum_{k=0}^{\infty}|z'|^{2\lambda_k}
\leq\sum_{k=0}^{\infty}|z'|^{2rk}<\infty.
\]
Let $R_{z'}$ denote this row, including the factor $\sqrt{1-|z'|^2}$. Then
$R_{z'}a\neq0$. Given an old target $y$ and a desired new coordinate $t$,
choose $u$ with $\Phi^\Lambda_{\mathcal W}u=y$ and set
\[
v=u+\frac{t-R_{z'}u}{R_{z'}a}\,a.
\]
Since $\Phi^\Lambda_{\mathcal W}v=y$
and $R_{z'}v=t$, the augmented operator $\Phi^\Lambda_{\mathcal W \cup\{z'\}}$ is surjective. Iterating this argument finitely
many times proves the proposition.
\end{proof}

We are now ready to proceed with the final step of the outline and prove Theorem \ref{T:main1}.

\begin{proof}[Proof of Theorem~\ref{T:main1}]
By Proposition~\ref{P:carleson_powers}, $\{D^{rk}g\}_{k\geq0}$ is a frame.
The estimate in Lemma~\ref{L:Stolz_outer_frames} also gives
\[
\begin{aligned}
\sum_{k=0}^{\infty}\|(D^{rk+j_k}-D^{rk})g\|^2
&=\sum_{j=0}^{\infty}(1-|z_j|^2)
  \sum_{k=0}^{\infty}|z_j^{rk+j_k}-z_j^{rk}|^2\\
&\leq C_r^2\sum_{j=0}^{\infty}(1-|z_j|^2)<\infty.
\end{aligned}
\]
Hence $\{(D^{rk+j_k}-D^{rk})g\}_{k\geq0}$ is Bessel, and so is
$\{D^{rk+j_k}g\}_{k\geq0}$.

Lemma~\ref{L:Stolz_outer_frames} implies that the tail synthesis operator
$\Phi^\Lambda_{\mathcal Z_J}$ is surjective for some $J$. By
Proposition~\ref{P:fractional-replacement}, choose
$z'_0,\ldots,z'_{J-1}\in\mathcal S_\alpha\cap\DD_c$ such that
$\Phi^\Lambda_{\mathcal Z'}$ is surjective, where
\[
\mathcal Z'=\{z'_0,\ldots,z'_{J-1}\}\cup\mathcal Z_J.
\]
The difference $\Phi^\Lambda_{\mathcal Z}-\Phi^\Lambda_{\mathcal Z'}$ has
range in $\operatorname{span}\{\delta_0,\ldots,\delta_{J-1}\}$ and is
therefore compact. By \cite[Theorem~22.2.1]{Christensen16},
$\{D^{rk+j_k}g\}_{k\geq0}$ is a frame sequence.

It remains to prove completeness. 
Since
\[
    \lambda_k=rk+j_k=rk+O(1),
\]
comparison with the harmonic series in every multiplicative interval
\([t,\mu t]\) gives
    $L(\Lambda)=1/r$,
where 
\[
L(\Lambda):= \inf_{\mu > 1} \frac{1}{\log \mu} \limsup_{t \to \infty} \sum_{\Lambda \cap [t,\mu t]}\frac{1}{\lambda}
\]
is 
the logarithmic block density of the set $\Lambda=\{rk+j_k:k\geq0\}$.
Thus $r<\pi/c$ gives $L(\Lambda)=1/r>c/\pi$, and completeness follows from
\cite[Theorem~3.5]{KM26}. The frame sequence is therefore a frame for
$\ell^2(\N_0)$.
\end{proof}

\begin{remark}
The final step above differs from the positive-real argument in \cite{KM26}.
Rather than using the M\"untz--Sz\'asz theorem to add deleted rows, it uses
analyticity on the slit disc, discreteness of zero sets, and a compact
perturbation result for frame sequences.
\end{remark}

\begin{corollary}\label{C:Stolz_general}
Assume that $\mathcal Z=\{z_j\}_{j\geq0}$ is a separated sequence contained
in $\mathcal S_\alpha\cap\DD_c$ for some $\alpha\geq1$ and $c\in[0,\pi)$.
Let $D$ and $g$ be the associated Carleson operator and canonical generating vector. Let $\Lambda\subset[0,\infty)$ be countably infinite. If there are $r>0$ and
$N\in\N$, with $r<\pi/c$, such that
\[
1\leq\bigl|\Lambda\cap[rm,r(m+1))\bigr|\leq N,
\qquad m\in\N_0,
\]
then $\{D^\lambda g\}_{\lambda\in\Lambda}$ is a frame for
$\ell^2(\N_0)$.
\end{corollary}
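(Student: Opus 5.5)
The plan is to reduce Corollary~\ref{C:Stolz_general} to the argument already given for Theorem~\ref{T:main1}. The only new feature is that a block $[rm,r(m+1))$ may contain up to $N$ points of $\Lambda$ instead of exactly one.

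First, split $\Lambda$ into at most $N$ subfamilies. In each block choose one distinguished point $\mu_m\in\Lambda\cap[rm,r(m+1))$, and write $\mu_m=rm+j_m$ with $j_m\in[0,r)$. By Theorem~\ref{T:main1}, the family $\{D^{\mu_m}g\}_{m\geq0}$ is a frame for $\ell^2(\N_0)$. The remaining points are $\Lambda\setminus\{\mu_m\}$, and each block contains at most $N-1$ of them. Enumerate these within blocks as $\nu_m^{(i)}$ for $i=1,\dots,N-1$, allowing some to be absent. Each family $\{D^{\nu^{(i)}_m}g\}_m$ is Bessel, by the estimate in the proof of Theorem~\ref{T:main1}: compared with $\{D^{rm}g\}_m$ its perturbation is bounded by $C_r^2\sum_j(1-|z_j|^2)<\infty$. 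Missing indices only reduce these sums.

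A frame union finitely many Bessel sequences is again a frame. The lower bound is inherited from the sub-family $\{D^{\mu_m}g\}_m$, and the upper bound is the sum of the Bessel bounds. Hence $\{D^\lambda g\}_{\lambda\in\Lambda}$ is a frame.

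The only point needing care is the Bessel property for arbitrary subsets, with gaps, of the exponents $rm+j_m$. This follows from Lemma~\ref{L:Stolz}, which gives $|z^{\nu}-z^{rm}|\le C_r(1-|z|^r)|z|^{rm}$ uniformly in $\nu\in[rm,r(m+1))$. Summing over the present indices $m$ gives at most $C_r^2$, so the bound holds whether or not every block is occupied. Together with the Bessel property of $\{D^{rm}g\}_m$, which holds because it is a frame by Proposition~\ref{P:carleson_powers}, this completes the argument. I do not expect a genuine obstacle; the main care is bookkeeping the enumeration so that Theorem~\ref{T:main1} applies with $j_m\in[0,r)$.
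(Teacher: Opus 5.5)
Your proof is correct and takes essentially the paper's approach: one full track with a point in every block gives the lower frame bound via Theorem~\ref{T:main1}, the remaining at most $N-1$ partial tracks are Bessel, and the union is therefore a frame. The only difference is in how you get the Bessel bounds. You obtain them directly from the Lemma~\ref{L:Stolz} estimate $|z^{\nu}-z^{rm}|\le C_r(1-|z|^r)|z|^{rm}$ against $\{D^{rm}g\}_m$, whereas the paper completes each partial track to a full one and reads off the Bessel bound from Theorem~\ref{T:main1} again; this amounts to the same computation.
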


\begin{proof}
Set $I_m=[rm,r(m+1))$ and
$n_m=|\Lambda\cap I_m|$. For each $m$, enumerate the points of
$\Lambda\cap I_m$ as
\[
\Lambda\cap I_m=\{\lambda_{m,1},\ldots,\lambda_{m,n_m}\}.
\]
For $1\leq\ell\leq N$, define the $\ell$-th partial track by
\[
\Lambda_\ell
=\{\lambda_{m,\ell}:m\in\N_0,\ n_m\geq\ell\}.
\]
Then the tracks are pairwise disjoint and
$\Lambda=\bigcup_{\ell=1}^N\Lambda_\ell$.

Because $n_m\geq1$, the first track has exactly one point in every block.
Writing $\lambda_{m,1}=rm+j_m$, with $j_m\in[0,r)$, and applying
Theorem~\ref{T:main1}, we see that
$\{D^\lambda g\}_{\lambda\in\Lambda_1}$ is a frame. Let $A_1>0$ and
$B_1<\infty$ be lower and upper frame bounds for this track.

For $2\leq\ell\leq N$, complete $\Lambda_\ell$ by choosing an arbitrary
point $\widetilde\lambda_{m,\ell}\in I_m$ whenever $n_m<\ell$, and setting
$\widetilde\lambda_{m,\ell}=\lambda_{m,\ell}$ otherwise. The completed set
\[
\widetilde\Lambda_\ell
=\{\widetilde\lambda_{m,\ell}:m\in\N_0\}
\]
has exactly one point in every block, so Theorem~\ref{T:main1} implies that
$\{D^\lambda g\}_{\lambda\in\widetilde\Lambda_\ell}$ is a frame. Its
subfamily indexed by $\Lambda_\ell$ is therefore Bessel; let $B_\ell$ be a
Bessel bound.

For every $x\in\ell^2(\N_0)$,
\[
\sum_{\lambda\in\Lambda}|\langle x,D^\lambda g\rangle|^2
\leq\left(\sum_{\ell=1}^N B_\ell\right)\|x\|^2,
\]
while the first track gives
\[
\sum_{\lambda\in\Lambda}|\langle x,D^\lambda g\rangle|^2
\geq A_1\|x\|^2.
\]
Thus the full family is a frame.
\end{proof}

    \begin{remark}
\label{R:sharpness}
Assume that \(0<c<\pi\). The angular restriction
\[
    r<\frac{\pi}{c}
\]
in Theorem~\ref{T:main1} is sharp in the following uniform sense.

Fix \(R\geq \pi/c\) and put
\[
    \vartheta=\frac{\pi}{R}\in(0,c].
\]
Choose \(0<\rho<1\), and define
\[
    \zeta_{+}=\rho e^{i\vartheta},
    \qquad
    \zeta_{-}=\rho e^{-i\vartheta},
    \qquad
    x_n=1-2^{-(n+2)},\quad n\geq0.
\]
After enumerating the points so that \(z_0=\zeta_{+}\) and
\(z_1=\zeta_{-}\), let
\[
    \mathcal Z=\{\zeta_{+},\zeta_{-}\}
      \cup\{x_n:n\geq0\}.
\]
The real sequence \(\{x_n\}_{n\geq0}\) is separated. In fact, if
\(n<m\), then
\[
\left|
    \frac{x_n-x_m}{1-x_nx_m}
\right|
\geq \frac{1}{3}.
\]
Moreover,
\[
    \rho_{\DD}(\zeta_{\pm},x_n)
    =
    \left|
        \frac{\zeta_{\pm}-x_n}
             {1-\overline{\zeta_{\pm}}x_n}
    \right|
    \longrightarrow 1
    \qquad (n\to\infty),
\]
so adjoining the two points \(\zeta_{+}\) and \(\zeta_{-}\)
preserves separation. Also \(\mathcal Z\subset\DD_c\), and
\(\mathcal Z\subset\mathcal S_\alpha\) for
\[
    \alpha=
    \max\left\{
        1,
        \frac{|1-\zeta_{+}|}{1-\rho},
        \frac{|1-\zeta_{-}|}{1-\rho}
    \right\}.
\]
Thus \(\mathcal Z\) satisfies the geometric hypotheses of Theorem~\ref{T:main1}.

On the other hand, using the principal branch,
\[
    \zeta_{+}^{\,R}
    =
    \rho^R e^{i\pi}
    =
    -\rho^R
    =
    \rho^R e^{-i\pi}
    =
    \zeta_{-}^{\,R}.
\]
Let \(D\) be the diagonal operator associated with \(\mathcal Z\), and let
\(g\) be its canonical generating vector. Choosing \(j_k=0\) for
every \(k\geq0\), we obtain
\[
\begin{aligned}
    (D^{Rk}g)_0
    &=
    \zeta_{+}^{\,Rk}\sqrt{1-\rho^2}  \\
    &=
    \zeta_{-}^{\,Rk}\sqrt{1-\rho^2}
    =
    (D^{Rk}g)_1,
    \qquad k\geq0.
\end{aligned}
\]
Consequently,
\[
    \big\langle \delta_0-\delta_1,D^{Rk}g\big\rangle=0,
    \qquad k\geq0.
\]
Hence \(\{D^{Rk}g\}_{k\geq0}\) is not complete and therefore cannot
be a frame. In particular, the endpoint \(R=\pi/c\) cannot generally
be included, and the conclusion also fails in general for every
\(R>\pi/c\).
\end{remark}

\section{Frames from Functional Calculus Perturbations}
\label{S:FFC}

In this section, we turn our attention to more general FFC and prove Theorem \ref{T:main2}. The outline of the proof is the same as for Theorem \ref{T:main1}. Thus, we begin with an analog of Lemma \ref{L:Stolz_outer_frames} for this setting.

\begin{lemma}\label{C:truncation-Stolz}
Assume that $\mathcal Z=\{z_j\}_{j\geq0}$ is a separated sequence contained
in $\mathcal S_\alpha\cap\DD_c$ for some $\alpha\geq1$ and $c\in[0,\pi)$.
Let $D$ and $g$ be the associated Carleson operator and canonical generating vector.
Let $\Lambda=\{\lambda_k:k\geq0\}\subset[0,\infty)$ be countably infinite
and enumerated increasingly. Suppose that there are $r>0$ and $N\in\N$, with
$r<\pi/c$, such that
\[
1\leq\bigl|\Lambda\cap[rm,r(m+1))\bigr|\leq N,
\qquad m\in\N_0.
\]
If, for some $M\geq0$,
\begin{equation}\label{E:fk-ell2-perturbation-tail}
\sum_{k=0}^{\infty}\left|f_k(z_j)-z_j^{\lambda_k}\right|^2\leq M^2,
\qquad j\geq0,
\end{equation}
then $\{f_k(D)g\}_{k\geq0}$ is an outer frame for
$V_J=\overline{\operatorname{span}}\{\delta_j:j\geq J\}$ for some integer
$J$.
\end{lemma}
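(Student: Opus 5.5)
The plan is to apply Lemma~\ref{L:truncation} with $h_k(z)=z^{\lambda_k}$ and with the given $f_k$. Two things are needed.

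\textbf{The unperturbed system is a frame.} We need $\{h_k(D)g\}_{k\geq0}=\{D^{\lambda_k}g\}_{k\geq0}$ to be a frame with some lower bound $A>0$. This is exactly Corollary~\ref{C:Stolz_general}. The block-counting hypothesis $1\leq|\Lambda\cap[rm,r(m+1))|\leq N$ with $r<\pi/c$ is the same hypothesis used there. The enumeration of $\Lambda$ in increasing order only re-indexes the family, and a re-indexing does not affect the frame property.

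\textbf{The perturbation hypothesis.} The condition \eqref{E:ell2-perturb-truncation} of Lemma~\ref{L:truncation} is precisely \eqref{E:fk-ell2-perturbation-tail}, which is assumed for all $j\geq0$, so we may take $J'=0$. Lemma~\ref{L:truncation} then produces $J$ such that $\{P_{V_J}f_k(D)g\}_{k\geq0}$ is a frame for $V_J$. Equivalently, $\{f_k(D)g\}_{k\geq0}$ is an outer frame for $V_J$.

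\textbf{Main obstacle.} The only point needing care is that the vectors $f_k(D)g$ and their projections must be well defined in $\ell^2$. The perturbation estimate in the proof of Lemma~\ref{L:truncation} also needs $\{P_{V_J}f_k(D)g\}$ to be a genuine $\ell^2$-perturbation of a frame. I would check this directly. Interchanging sums by Tonelli gives
\[
\sum_k\|P_{V_J}(f_k(D)-D^{\lambda_k})g\|^2\leq M^2\sum_{j\geq J}(1-|z_j|^2)<\infty.
\]
By the Blaschke condition, this quantity is less than $A$ for large $J$. Each $f_k(D)g$ lies in $\ell^2$ because $D^{\lambda_k}g\in\ell^2$, and the difference has finite norm by the same estimate taken with $J=0$. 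With these checks in place, the cited perturbation lemma \cite[Lemma~1.2]{KM26} applies verbatim.
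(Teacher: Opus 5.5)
Your proposal is correct and is essentially the paper's proof: use Corollary~\ref{C:Stolz_general} to see that $\{D^{\lambda_k}g\}_{k\geq0}$ is a frame, then apply Lemma~\ref{L:truncation} with $h_k(z)=z^{\lambda_k}$ and $J'=0$. Your additional well-definedness and Tonelli checks are valid, but the proof of Lemma~\ref{L:truncation} already contains them.
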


\begin{proof}
By Corollary \ref{C:Stolz_general}, the sequence $\{D^{\lambda_k}g\}_{k\geq0}$ forms a frame. Apply Lemma \ref{L:truncation} with $h_k(z)=z^{\lambda_k}$ and use \eqref{E:fk-ell2-perturbation-tail}.
\end{proof}

The final step of the proof of Theorem \ref{T:main2}, however, cannot be replicated from the previous section since we do not have an analog of \cite[Theorem~3.5]{KM26} for this setting. Fortunately, we are still able to follow the blueprint of \cite[Section 2]{KM26}.

\begin{remark}
One might be tempted to think that in lieu of the last step of the proof, it would suffice to notice that $(f_k(D)g)_{k \geq 0}$ is an outer frame for both $V_J$ and its orthogonal complement. However, this fact itself does not, in general, guarantee that the system is a frame. Indeed, the system $\{(1,1), (2,2)\}$ is an outer frame for both $\{0\}\times\CC$ and $\CC\times \{0\}$, but clearly it is not a frame for $\CC^2$.
\end{remark}

\begin{proof}[Proof of Theorem~\ref{T:main2}]
Let $J$ be the integer guaranteed by Lemma \ref{C:truncation-Stolz}. Thus $\{f_k(D)g\}_{k\geq0}$ is an outer frame for $V_J$ with frame bounds $B\geq A>0$. Put
\[
\Phi_J=\left(f_k(z_j)\sqrt{1-|z_j|^2}\right)_{j\geq J,\,k\geq0},
\]
and
\[
\Phi=\Phi_0=\left(f_k(z_j)\sqrt{1-|z_j|^2}\right)_{j\geq0,\,k\geq0}.
\]
The operator $\Phi_J$ is bounded and surjective. By \eqref{E:fk-unif-linfty3}, $\Phi$ is obtained from $\Phi_J$ by adding finitely many square-summable rows. Hence $\Phi$ is bounded, and it only remains to prove surjectivity.

For $c=(c_k)_{k\geq0}\in\ell^2$, define
\[
\theta_c(z_j)=\sum_{k=0}^{\infty}c_k f_k(z_j),\qquad j\geq0,
\]
which is well defined by \eqref{E:fk-unif-linfty3}. Then
\[
\Phi_n c=\sum_{j=n}^{\infty}\theta_c(z_j)\sqrt{1-|z_j|^2}\,\delta_j,
\qquad n\in\N_0.
\]
Since $\Phi_J$ is a frame synthesis operator with bounds $B\geq A>0$,
\begin{equation}\label{E:norm-phi-inverse}
\|\Phi_J^*(\Phi_J\Phi_J^*)^{-1}\|\leq\frac{\sqrt B}{A}.
\end{equation}
Set
\[
\eta=\left((1-|z_{J-1}|^2)\sum_{k=0}^{\infty}|f_k(z_{J-1})|^2\right)^{1/2},
\qquad
M'=\left(\sum_{j=0}^{\infty}(1-|z_j|^2)\right)^{1/2},
\]
which is finite by the Blaschke condition. Choose
\[
0<\varepsilon<\frac{1}{2\left(1+\eta M'\frac{\sqrt B}{A}\right)}.
\]

Let $K_J=\overline{\{z_j:j\geq J\}}\subset\gamma([0,1])$. The point $z_{J-1}$ is separated from $K_J$: the only possible accumulation point of the tail is $1$, while $z_{J-1}\neq1$. Hence there is a continuous function $\varphi$ on $\gamma([0,1])$ such that
\[
\varphi(z_{J-1})=(1-|z_{J-1}|^2)^{-1/2},
\qquad
\varphi|_{K_J}=0.
\]
By the density assumption \eqref{E:fk-unif-linfty4}, choose a finitely supported sequence $b=(b_k)_{k\geq0}$ so that $\theta_b=\sum b_k f_k$ approximates $\varphi$ closely enough to ensure
\begin{equation}\label{E:theta-atj-1}
\left|\sqrt{1-|z_{J-1}|^2}\,\theta_b(z_{J-1})-1\right|<\varepsilon
\end{equation}
and
\begin{equation}\label{E:theta-atj-big}
|\theta_b(z_j)|<\varepsilon,
\qquad j\geq J.
\end{equation}
Put
\[
c=\Phi_J^*(\Phi_J\Phi_J^*)^{-1}\Phi_J b.
\]
Then
\begin{equation}\label{E:phib=phic}
\Phi_J c=\Phi_J b=\sum_{j=J}^{\infty}\theta_b(z_j)\sqrt{1-|z_j|^2}\,\delta_j,
\end{equation}
and, by \eqref{E:norm-phi-inverse} and \eqref{E:theta-atj-big},
\begin{align}
\|c\|
&\leq\frac{\sqrt B}{A}\|\Phi_J b\|\notag\\
&=\frac{\sqrt B}{A}\left(\sum_{j=J}^{\infty}|\theta_b(z_j)|^2(1-|z_j|^2)\right)^{1/2}\notag\\
&\leq \varepsilon\frac{\sqrt B}{A}M'.\label{E:estimation-c}
\end{align}
According to \eqref{E:phib=phic},
\[
\Phi_{J-1}(b-c)=\sqrt{1-|z_{J-1}|^2}\big(\theta_b(z_{J-1})-\theta_c(z_{J-1})\big)\delta_{J-1}.
\]
Moreover,
\begin{align*}
\|\Phi_{J-1}(b-c)\|
&=\left|\sqrt{1-|z_{J-1}|^2}\big(\theta_b(z_{J-1})-\theta_c(z_{J-1})\big)\right|\\
&\geq1-\varepsilon-
\sqrt{1-|z_{J-1}|^2}\,|\theta_c(z_{J-1})|\\
&\geq1-\varepsilon-\|c\|\eta\\
&\geq1-\varepsilon\left(1+\eta M'\frac{\sqrt B}{A}\right)>0.
\end{align*}
Thus $\delta_{J-1}$ belongs to the range of $\Phi_{J-1}$. Since $\Phi_J$ is surjective, this implies that $\Phi_{J-1}$ is surjective. Repeating the argument finitely many times adds the rows $J-2,J-3,\ldots,0$, so $\Phi$ is surjective. Therefore $\{f_k(D)g\}_{k\geq0}$ is a frame.
\end{proof}

We conclude the paper with examples illustrating Theorem \ref{T:main2}.

\begin{example}
\label{E:illustrative-main2}
\begin{enumerate}
    \item
\emph{A real triangular perturbation.}

Let
\[
z_j=1-2^{-(j+1)},\qquad j\geq0.
\]
Then $\mathcal Z=\{z_j\}_{j\geq0}\subset(0,1)$ is a separated Stolz sequence.  Let $D$ be the associated Carleson operator and let $g=\left(\sqrt{1-z_j^2}\right)_{j\geq0}$.  Take
\[
\lambda_k=k,
\qquad
\Lambda=\N_0,
\]
so the counting hypothesis \eqref{E:counting_hyp} holds with $r=1$ and $N=1$.  If $a=(a_k)_{k\geq1}\in\ell^2$, define
\[
f_0(z)=1,
\qquad
f_k(z)=z^k+a_kz^{k-1},\quad k\geq1.
\]
Then, the identity $z^k=f_k-a_kz^{k-1}$ shows inductively that every monomial lies in $\operatorname{span}\{f_k: k \in \mathbb{N}_0\}$; hence \eqref{E:fk-unif-linfty4} follows from the Weierstrass theorem. Finally,
\[
\sum_{k=0}^{\infty}|f_k(z)-z^k|^2
=\sum_{k=1}^{\infty}|a_k|^2z^{2k-2}
\leq\|a\|_2^2.
\]
Thus Theorem \ref{T:main2} implies that $\{f_k(D)g\}_{k\geq0}$ is a frame for $\ell^2(\N_0)$.

\medskip
\item
\emph{A non-real Stolz spectrum.}

Fix, for example, $q=1/2$ and $\tau=1/2$, and set
\[
s_j=q^{j+2},
\qquad
z_j=1-s_j+i\tau s_j,
\qquad j\geq0.
\]
Then $|1-z_j|\asymp s_j$ and $1-|z_j|\asymp s_j$, so $\mathcal Z\subset\mathcal S_\alpha$ for some $\alpha$.  Since the $s_j$ decrease geometrically, the pseudo-hyperbolic distances between distinct $z_j$ are bounded below; hence $\mathcal Z$ is separated.  The arguments of the points are bounded by some $c<\pi$, so $\mathcal Z\subset\DD_c$.

\begin{figure}[H]
\centering
\begin{tikzpicture}[scale=2.45]
  \def\a{2.15}
  \fill[gray!8] (0,0) circle (1);
  \draw[gray!65] (0,0) circle (1);
  \path[fill=gray!16,draw=black!65,thick]
    plot[domain=-180:180,samples=180,variable=\t]
    ({((\a*\a-cos(\t))-sqrt((\a*\a-cos(\t))*(\a*\a-cos(\t))-(\a*\a-1)*(\a*\a-1)))/(\a*\a-1)*cos(\t)},
     {((\a*\a-cos(\t))-sqrt((\a*\a-cos(\t))*(\a*\a-cos(\t))-(\a*\a-1)*(\a*\a-1)))/(\a*\a-1)*sin(\t)}) -- cycle;
  \draw[->] (-1.08,0) -- (1.22,0) node[right] {$\operatorname{Re} z$};
  \draw[->] (0,-1.08) -- (0,1.15) node[above] {$\operatorname{Im} z$};
  \draw[black,thick] (0.75,0.125) -- (1,0);
  \foreach \x/\y in {0.750/0.125,0.875/0.0625,0.938/0.03125,0.969/0.015625,0.984/0.0078125,0.992/0.00390625}{
    \fill[black] (\x,\y) circle (0.018);
  }
  \fill (1,0) circle (0.015) node[below right] {$1$};
  \node[black] at (0.77,0.23) {$z_j=1-s_j+i\tau s_j$};
  \node[gray!70] at (0.33,-0.16) {$\mathcal S_\alpha$};
\end{tikzpicture}
\caption{The non-real example: a geometrically spaced spectrum approaching $1$ along a non-tangential line segment inside a Stolz domain}
\label{F:nonreal-stolz-spectrum}
\end{figure}

Let $\gamma(t)=1-s_0t+i\tau s_0t$, $0\leq t\leq1$, so that $\mathcal Z\subset\gamma([0,1])$.  Use the same functions
\[
f_0(z)=1,
\qquad
f_k(z)=z^k+a_kz^{k-1},\quad a\in\ell^2,
\]
and  $\Lambda=\N_0$.  Polynomials are dense on the line segment $\gamma([0,1])$ because it is an affine image of an interval, and the same triangular argument gives density of $\operatorname{span}\{f_k:k\geq0\}$.  Moreover,
\[
\sum_{k=0}^{\infty}|f_k(z_j)-z_j^k|^2
\leq\|a\|_2^2,
\qquad j\geq0.
\]
Thus Theorem \ref{T:main2} applies and yields a frame $\{f_k(D)g\}_{k\geq0}$ for a non-real Stolz spectrum.

\medskip

\item

\emph{A curved non-real Stolz spectrum.}

The previous example lies on a line segment.  To get a genuinely non-linear geometry, fix $q=1/2$, set $s_j=q^{j+2}$, and put
\[
z_j=1-s_j+4is_j^2,\qquad j\geq0.
\]
Then $\mathcal Z=\{z_j\}_{j\geq0}$ lies on the parabolic arc
\[
\Gamma=\{1-t+4it^2:0\leq t\leq1/4\},
\]
which is not contained in any affine line.
Moreover,
\[
|1-z_j|=s_j\sqrt{1+16s_j^2}\asymp s_j,
\qquad
1-|z_j|\asymp s_j,
\]
so $\mathcal Z\subset\mathcal S_\alpha$ for a suitable $\alpha\geq1$ (in fact, for every $\alpha >1$, all but finitely many points of $\mathcal Z$ belong to $\mathcal S_\alpha$).  The arguments of the points are bounded by some $c<\pi$, hence $\mathcal Z\subset\DD_c$.  Finally, the geometric spacing of the parameters implies separation: if $j<k$, then $s_k\leq qs_j$, the map $t\mapsto1-t+4it^2$ is bi-Lipschitz on $[0,1/4]$, and
\[
|z_j-z_k|\asymp |s_j-s_k|,
\qquad
|1-\overline{z_k}z_j|\asymp s_j+s_k.
\]
Therefore the pseudo-hyperbolic distances between distinct points are bounded below by a positive constant.

\begin{figure}[H]
\centering
\begin{tikzpicture}[scale=2.7, line cap=round, line join=round]
  \def\a{2.15}
  \fill[gray!8] (0,0) circle (1);
  \draw[gray!65] (0,0) circle (1);
  \path[fill=gray!14,draw=black!60,thick]
    plot[domain=-180:180,samples=180,variable=\t]
    ({((\a*\a-cos(\t))-sqrt((\a*\a-cos(\t))*(\a*\a-cos(\t))-(\a*\a-1)*(\a*\a-1)))/(\a*\a-1)*cos(\t)},
     {((\a*\a-cos(\t))-sqrt((\a*\a-cos(\t))*(\a*\a-cos(\t))-(\a*\a-1)*(\a*\a-1)))/(\a*\a-1)*sin(\t)}) -- cycle;
  \draw[->] (-1.08,0) -- (1.22,0) node[right] {$\operatorname{Re}z$};
  \draw[->] (0,-1.08) -- (0,1.150) node[above] {$\operatorname{Im}z$};
  \draw[black!65,dashed] (0.75,0.25) -- (1,0);
  \draw[black,very thick,domain=0:1,samples=80,variable=\u]
    plot ({1-0.25*\u},{0.25*\u*\u});
  \foreach \u in {1,0.5,0.25,0.125,0.0625,0.03125}{
    \fill[black] ({1-0.25*\u},{0.25*\u*\u}) circle (0.017);
  }
  \fill (1,0) circle (0.014) node[below right] {$1$};
  \node[black,align=center] at (0.48,0.69) {$z_j=1-s_j+4is_j^2$};
  \draw[->,black!75] (0.60,0.65) -- (0.75,0.25);
  \node[gray!70] at (0.30,-0.13) {$\mathcal S_\alpha$};
\end{tikzpicture}
\caption{A non-real Carleson spectrum on a parabolic arc.  The dashed chord emphasizes that the spectrum is not contained in a line segment}
\label{F:curved-nonreal-spectrum}
\end{figure}

Let
\[
\gamma(u)=1-\frac{u}{4}+\frac{i}{4}u^2,\qquad 0\leq u\leq1,
\]
so that $\gamma(q^j)=z_j$.  Take again $\Lambda=\N_0$ and
\[
f_0(z)=1,\qquad f_k(z)=z^k+a_kz^{k-1},\quad k\geq1,
\]
where $a=(a_k)_{k\geq1}\in\ell^2$.  The $\ell^2$ perturbation estimate is the same as in the preceding examples.  The density condition follows from Mergelyan's theorem \cite{Mergelyan52}, because $\gamma([0,1])$ is a Jordan arc and hence has connected complement; equivalently, polynomials are dense in $C(\gamma([0,1]))$.  The triangular relation again transfers this density from monomials to the family $\{f_k\}_{k\geq0}$.  Consequently Theorem~\ref{T:main2} applies, and
$\{f_k(D)g\}_{k\geq0}$ is a frame for $\ell^2(\N_0)$ with a spectrum that
is not contained in any line segment.

\medskip
\item
\emph{A non-polynomial functional-calculus family.}

Retain the parabolic spectrum and arc $\Gamma$ from the preceding example.
Define
\[
h(z)=e^{1-z},
\qquad
f_k(z)=h(z)z^k=e^{1-z}z^k,
\qquad k\geq0,
\]
and take $\lambda_k=k$. Since $h$ has no zeros on $\Gamma$, multiplication
by $h$ is a bounded invertible operator on $C(\Gamma)$. Mergelyan's theorem, the connectedness of $\CC\setminus\Gamma$, and the
empty interior of $\Gamma$ give
\[
\overline{\operatorname{span}}\{1,z,z^2,\ldots\}=C(\Gamma),
\]
and therefore
\[
\overline{\operatorname{span}}\{f_k:k\geq0\}=C(\Gamma).
\]

It remains to verify the perturbation hypothesis. Put
\[
R=\max_{z\in\Gamma}|1-z|.
\]
Using $|e^w-1|\leq e^{|w|}|w|$ and the Stolz
condition, we obtain
\[
\begin{aligned}
\sum_{k=0}^{\infty}|f_k(z_j)-z_j^k|^2
&=\frac{|e^{1-z_j}-1|^2}{1-|z_j|^2}\\
&\leq e^{2R}\frac{|1-z_j|^2}
{(1-|z_j|)(1+|z_j|)}\\
&\leq e^{2R}\alpha^2
\frac{1-|z_j|}{1+|z_j|}
\leq e^{2R}\alpha^2.
\end{aligned}
\]
Theorem~\ref{T:main2} now shows that
\[
\{e^{I-D}D^kg\}_{k\geq0}
\]
is a frame. This example involves a genuinely non-polynomial functional
calculus on a non-real curved Carleson spectrum.

\end{enumerate}
\end{example}

The preceding examples admit direct proofs of the frame property using triangular
coefficient transformations or a common invertible multiplier.
The next construction has neither of these forms; moreover, for
large values of its parameter $\kappa$ the standard global perturbation criterion fails. The example presents a more vivid illustration of the tail-restoration mechanism of Theorem \ref{T:main2}.

\begin{example}[A localized nonanalytic perturbation]
\label{E:localized-perturbation}

We begin with the same parabolic Jordan arc as before:
\[
    \Gamma=\gamma([0,1]),
    \qquad
    \gamma(t)
    =
    1-\frac{t}{4}+\frac{i t^2}{4},
    \qquad 0\leq t\leq1.
\]
For \(j\geq0\), set
\[
    t_j=2^{-j},
    \qquad
    z_j=\gamma(t_j)
    =
    1-\frac{t_j}{4}+\frac{i t_j^2}{4}.
\]
Thus
\[
    z_j
    =
    1-2^{-(j+2)}+4i\,2^{-2(j+2)},
\]
and this is precisely the curved non-real spectrum considered in the
preceding example. In particular, \(\mathcal Z=\{z_j\}_{j\geq0}\) is a
separated sequence contained in
\(\mathcal S_\alpha\cap D_c\) for suitable
\(\alpha\geq1\) and \(0\leq c<\pi\).

We first construct continuous functions localized at the individual
spectral points. Put
\[
    a_j=\frac{t_j+t_{j+1}}{2},
    \qquad j\geq0,
\]
and
\[
    b_0=1,
    \qquad
    b_j=\frac{t_{j-1}+t_j}{2},
    \qquad j\geq1.
\]
For every \(j\geq0\), choose a continuous hat function
\(\varphi_j\in C([0,1])\) such that
\[
    0\leq\varphi_j\leq1,
    \qquad
    \varphi_j(t_j)=1,
    \qquad
    \varphi_j(t)=0
    \quad\text{for }t\notin[a_j,b_j].
\]
The functions may be chosen piecewise linear and zero at every
endpoint other than \(t_0=b_0\). Their support sets are pairwise
disjoint. Define
\[
    \psi_j(\gamma(t))=\varphi_j(t),
    \qquad 0\leq t\leq1.
\]
Since \(\gamma\) is injective, each \(\psi_j\) is well defined and
belongs to \(C(\Gamma)\). Moreover,
\[
    \|\psi_j\|_\infty=1,
    \qquad
    \psi_k(z_j)=\delta_{kj},
    \qquad j,k\geq0.
\]

Let
\[
    \varepsilon_k=2^{-(k+1)^2},
    \qquad k\geq0,
\]
fix a constant \(\kappa>0\), and define
\[
    f_k(z)
    =
    z^k+\kappa\varepsilon_k\psi_k(z),
    \qquad z\in\Gamma,\quad k\geq0.
\]
We take
\[
    \lambda_k=k,
    \qquad
    \Lambda=\mathbb N_0,
\]
so that the block-counting condition \eqref{E:counting_hyp} holds with \(r=1\) and \(N=1\).

At the spectral points, the functions satisfy the particularly simple
identity
\begin{equation}
\label{E:localized-values}
    f_k(z_j)
    =
    z_j^k+\kappa\varepsilon_k\delta_{kj}.
\end{equation}
Consequently,
\begin{equation}
\label{E:localized-vectors}
    f_k(D)g
    =
    D^kg
    +
    \kappa\varepsilon_k
    \sqrt{1-|z_k|^2}\,\delta_k.
\end{equation}

We now verify the remaining hypotheses of Theorem~\ref{T:main2}.

The pointwise perturbation condition \eqref{E:fk-ell2-perturbation-main} is immediate:
\begin{equation}
\label{E:localized-l2-perturbation}
\begin{aligned}
    \sum_{k=0}^{\infty}
        |f_k(z_j)-z_j^k|^2
    &=
    \kappa^2\varepsilon_j^2                         \\
    &=
    \kappa^2 4^{-(j+1)^2}
    \leq \kappa^2,
    \qquad j\geq0.
\end{aligned}
\end{equation}

It remains to prove the density condition
$\overline{\operatorname{span}}
    \{f_k:k\geq0\}
    =
    C(\Gamma)$.
Suppose that a finite complex Borel measure \(\mu\) on \(\Gamma\)
annihilates every \(f_k\). Set
\[
    m_k=\int_\Gamma z^k\,d\mu(z),
    \qquad k\geq0.
\]
Since
\[
    0
    =
    \int_\Gamma f_k\,d\mu
    =
    m_k
    +
    \kappa\varepsilon_k
    \int_\Gamma\psi_k\,d\mu,
\]
we obtain
\[
    |m_k|
    \leq
    \kappa\|\mu\|\varepsilon_k
    =
    \kappa\|\mu\|2^{-(k+1)^2}.
\]
In particular,
\begin{equation}
\label{E:superexponential-moments}
    \limsup_{k\to\infty}|m_k|^{1/k}=0.
\end{equation}

Consider the Cauchy transform
\[
    \mathcal C_\mu(w)
    =
    \int_\Gamma\frac{d\mu(z)}{w-z},
    \qquad w\in\mathbb C\setminus\Gamma.
\]
For \(|w|>1\), expansion of the Cauchy kernel gives
\[
    \mathcal C_\mu(w)
    =
    \sum_{k=0}^{\infty}\frac{m_k}{w^{k+1}}.
\]
By \eqref{E:superexponential-moments}, the function
\[
    H(w)
    :=
    \sum_{k=0}^{\infty}\frac{m_k}{w^{k+1}}
\]
is holomorphic on \(\mathbb C\setminus\{0\}\). Since
\(0\notin\Gamma\) and a Jordan arc does not separate the plane,
    $\mathbb C\setminus(\Gamma\cup\{0\})$
is connected. The identity theorem therefore gives
\[
    \mathcal C_\mu(w)=H(w),
    \qquad
    w\in\mathbb C\setminus(\Gamma\cup\{0\}).
\]

Choose \(\rho>0\) so small that  $\overline{B_\rho(0)}\cap\Gamma=\varnothing$.
The Cauchy transform \(\mathcal C_\mu\) is holomorphic on
\(B_\rho(0)\), whereas
\[
    H(w)=\sum_{k=0}^{\infty}\frac{m_k}{w^{k+1}}
\]
agrees with \(\mathcal C_\mu\) on the punctured disc
\(B_\rho(0)\setminus\{0\}\). Hence the singularity of \(H\) at
\(0\) is removable. Since the displayed series is the Laurent
expansion of \(H\) at \(0\), all its negative Laurent coefficients
must vanish. Therefore
\[
    m_k=0,\qquad k\geq0.
\]
Thus \(\mu\) annihilates every polynomial. Since \(\Gamma\) is a
Jordan arc, Mergelyan's theorem implies that the polynomials are
dense in \(C(\Gamma)\). Consequently, \(\mu=0\).

The Hahn--Banach theorem and the Riesz
representation theorem now imply that
$\overline{\operatorname{span}}
    \{f_k:k\geq0\}
    =
    C(\Gamma)$.

All the assumptions of Theorem~\ref{T:main2} are therefore satisfied,
and we conclude that
    $\{f_k(D)g\}_{k\geq0}$
is a frame for \(\ell^2(\mathbb N_0)\) for every \(\kappa>0\).
\end{example}

\section*{Acknowledgments}  The first author was supported in part by the Fulbright Global Scholar Award; he thanks the second author for his wonderful hospitality in Qu\'ebec. The second author acknowledges support from the Canada Research Chairs Program CRC-2022-00097 and
the NSERC Discovery Grant RGPIN-2024-04232.
We also wish to thank Ole Christensen and Marzieh Hasannasab for pointing out the compact-perturbation argument to us and for further useful discussions. Finally, the authors acknowledge the use of generative AI for assistance in reviewing and improving this manuscript. 

\section*{Statements and Declarations}
\noindent\textbf{Conflict of interest statement.}
The authors declare that there are no conflicts of interest.

\medskip
\noindent\textbf{Data availability.}
No datasets were generated or analyzed during the current study.


\begin{thebibliography}{99}

\bibitem{ACKM26}
Aldroubi, A., Cabrelli, C., Krishtal, I., Molter, U.:
Dynamical sampling: a survey.
\emph{La Matematica} \textbf{5}, Article 37 (2026).
\url{https://doi.org/10.1007/s44007-026-00215-y}

\bibitem{ACMT17}
Aldroubi, A., Cabrelli, C., Molter, U., Tang, S.:
Dynamical sampling.
\emph{Appl. Comput. Harmon. Anal.} \textbf{42}, 378--401 (2017).
\url{https://doi.org/10.1016/j.acha.2015.08.014}

\bibitem{CMPP20}
Cabrelli, C., Molter, U., Paternostro, V., Philipp, F.:
Dynamical sampling on finite index sets.
\emph{J. Anal. Math.} \textbf{140}, 637--667 (2020).
\url{https://doi.org/10.1007/s11854-020-0099-2}

\bibitem{Christensen16}
Christensen, O.:
\emph{An Introduction to Frames and Riesz Bases}, 2nd edn.
Birkh\"auser, Cham (2016).
\url{https://doi.org/10.1007/978-3-319-25613-9}

\bibitem{CHPS24}
Christensen, O., Hasannasab, M., Philipp, F.M., Stoeva, D.:
The mystery of Carleson frames.
\emph{Appl. Comput. Harmon. Anal.} \textbf{72}, Article 101659 (2024).
\url{https://doi.org/10.1016/j.acha.2024.101659}

\bibitem{Garnett81}
Garnett, J.B.:
\emph{Bounded Analytic Functions}.
Pure and Applied Mathematics, vol.~96. Academic Press, New York (1981)


\bibitem{KM26}
Krishtal, I., Miller, B.:
Demystifying Carleson frames.
\emph{Appl. Comput. Harmon. Anal.} \textbf{80}, Article 101811 (2026).
\url{https://doi.org/10.1016/j.acha.2025.101811}

\bibitem{Mergelyan52}
Mergelyan, S.N.:
Uniform approximations to functions of a complex variable.
\emph{Uspekhi Mat. Nauk} \textbf{7}, 31--122 (1952)

\bibitem{NN2012}
Nikol'skii, N.K.:
\emph{Treatise on the Shift Operator: Spectral Function Theory}.
Springer, Berlin (1986).
\url{https://doi.org/10.1007/978-3-642-70151-1}

\end{thebibliography}
\end{document}